\newtheorem{thm}{\bf Theorem}[section]
\newtheorem{lem}[thm]{\bf Lemma}
\newtheorem{cor}[thm]{\bf Corollary}
\newtheorem{prop}[thm]{\bf Proposition}
\theoremstyle{definition}
\theoremstyle{remark}
\newtheorem{quest}[thm]{\bf Question}
\newtheorem{rem}[thm]{Remark}
\newtheorem{ex}[thm]{Example}
\DeclareMathOperator{\chara}{char}
\DeclareMathOperator{\Tor}{Tor}
\DeclareMathOperator{\reg}{reg}
\DeclareMathOperator{\depth}{depth}
\DeclareMathOperator{\astab}{astab}
\DeclareMathOperator{\Ass}{Ass}
\DeclareMathOperator{\Hom}{Hom}
\DeclareMathOperator{\Min}{Min}
\DeclareMathOperator{\htt}{ht}
\DeclareMathOperator{\gr}{gr}
\DeclareMathOperator{\Rees}{Rees}
\DeclareMathOperator{\grade}{grade}
 \newcommand{\mm}{{\mathfrak m}}
 \newcommand{\pp}{{\mathfrak p}}
 \newcommand{\qq}{{\mathfrak q}}
\title[Associated primes of powers]{Powers of sums and their associated primes}
\author{Hop D. Nguyen}
\address{Institute of Mathematics, VAST, 18 Hoang Quoc Viet, Cau Giay, 10307 Hanoi, Vietnam}
\email{ngdhop@gmail.com}
\author{Quang Hoa Tran}
\address{University of Education, Hue University, 34 Le Loi St., Hue City, Viet Nam}
\email{tranquanghoa@hueuni.edu.vn}
\subjclass[2010]{13A15, 13F20}
\keywords{Associated prime; powers of ideals; persistence property}
\begin{document}

\begin{abstract}
Let $A, B$ be polynomial rings over a field $k$, and $I\subseteq A, J\subseteq B$ proper homogeneous ideals. We analyze the associated primes of powers of $I+J\subseteq A\otimes_k B$ given the data on the summands. The associated primes of large enough powers of $I+J$ are determined. We then answer positively a question due to I. Swanson and R. Walker about the persistence property of $I+J$ in many new cases. 
\end{abstract}

\maketitle

\section{Introduction}
\label{sect_intro}
Inspired by work of Ratliff \cite{R}, Brodmann \cite{Br} proved that in any Noetherian ring, the set of associated primes of powers of an ideal is eventually constant for large enough powers. Subsequent work by many researchers have shown that important invariants of powers of ideals, for example, the depth and the Castelnuovo--Mumford regularity also eventually stabilize in the same manner. For a recent survey on associated primes of powers and related questions, we refer to Hoa's paper \cite{Hoa20}.

Our work is inspired by the aforementioned result of Brodmann, and recent studies about powers of sums of ideals \cite{HNTT, HTT, NgV}. Let $A, B$ be standard graded polynomial rings over a field $k$, and $I\subseteq A, J\subseteq B$ proper homogeneous ideals. Denote $R=A\otimes_k B$ and $I+J$ the ideal $IR+JR$. Taking sums of ideals this way corresponds to the geometric operation of taking fiber products of projective schemes over the field $k$. In \cite{HNTT,HTT,  NgV}, certain homological invariants of powers of $I+J$, notably the depth and regularity, are computed in terms of the corresponding invariants of powers of $I$ and $J$. In particular, we have exact formulas for $\depth R/(I+J)^n$ and $\reg R/(I+J)^n$ if either $\chara k=0$, or $I$ and $J$ are both monomial ideals. It is therefore natural to ask:
\begin{quest}
\label{quest_Assformula}
Is there an exact formula for $\Ass(R/(I+J)^n)$ in terms of the associated primes of powers of $I$ and $J$?  
\end{quest}
The case $n=1$ is simple and well-known: Using the fact that $R/(I+J)\cong (A/I)\otimes_k (B/J)$, we deduce (\cite[Theorem 2.5]{HNTT}):
\[
\Ass R/(I+J)=\mathop{\bigcup_{\pp \in \Ass_A(A/I)}}_{\qq \in \Ass_B(B/J)} \Min_R(R/\pp+\qq).
\]
Unexpectedly, in contrast to the case of homological invariants like depth or regularity, we do not have a complete answer to Question \ref{quest_Assformula} in characteristic zero yet. One of our main results is the following partial answer to this question.
\begin{thm}[Theorem \ref{thm_Asscontainments}]
\label{thm_main1}
Let $I$ be a proper homogeneous ideal of $A$ such that $\Ass(A/I^n)=\Ass(I^{n-1}/I^n)$ for all $n\ge 1$. Let $J$ be any proper homogeneous ideal of $B$. Then for all $n\ge 1$, there is an equality
\[
\Ass_R \frac{R}{(I+J)^n}  =   \bigcup_{i=1}^n \mathop{\bigcup_{\pp \in \Ass_A(A/I^i)}}_{\qq \in \Ass_B(J^{n-i}/J^{n-i+1})} \Min_R(R/\pp+\qq).
\]
If furthermore $\Ass(B/J^n)=\Ass(J^{n-1}/J^n)$ for all $n\ge 1$, then for all such $n$, there is an equality
\[
\Ass_R \frac{R}{(I+J)^n}  =   \bigcup_{i=1}^n \mathop{\bigcup_{\pp \in \Ass_A(A/I^i)}}_{\qq \in \Ass_B(B/J^{n-i+1})} \Min_R(R/\pp+\qq).
\]
\end{thm}
The proof proceeds by filtering $R/(I+J)^n$ using exact sequences with terms of the form $M\otimes_k N$, where $M,N$ are nonzero finitely generated modules over $A,B$, respectively, and applying the formula for $\Ass_R(M\otimes_k N)$.

Concerning Theorem \ref{thm_main1},  the hypothesis $\Ass(A/I^n)=\Ass(I^{n-1}/I^n)$ for all $n\ge 1$ holds in many cases, for example, if $I$ is a monomial ideal of $A$, or if $\chara k=0$ and $\dim(A/I)\le 1$ (see Theorem \ref{thm_specialcase_ass} for more details). We are not aware of any ideal in a polynomial ring which does not satisfy this condition (over non-regular rings, it is not hard to find such an ideal). In characteristic zero, we show that the equality $\Ass(A/I^n)=\Ass(I^{n-1}/I^n)$ holds for all $I$ and all $n$ if $\dim A\le 3$. If $\chara k=0$ and $A$ has Krull dimension four, using the Buchsbaum--Eisenbud structure theory of perfect Gorenstein ideals of height three and  work by Kustin and Ulrich \cite{KU}, we establish the equality $\Ass(A/I^2)=\Ass(I/I^2)$ for all $I\subseteq A$ (Theorem \ref{thm_dim4_nequals2}).

Another motivation for this work is the so-called persistence property for associated primes. The ideal $I$ is said to have the \emph{persistence property} if $\Ass(A/I^n)\subseteq \Ass(A/I^{n+1})$ for all $n\ge 1$. Ideals with this property abound, including for example complete intersections. The persistence property has been considered by many people; see, e.g., \cite{FHV, HQu, KSS, SW}. As an application of Theorem \ref{thm_main1}, we prove that if $I$ is a monomial ideal satisfying the persistence property, and $J$ is any ideal, then $I+J$ also has the persistence property (Corollary \ref{cor_persistence}). Moreover, we generalize previous work due to I. Swanson and R. Walker \cite{SW} on this question: If $I$ is an ideal such that $I^{n+1}:I=I^n$ for all $n\ge 1$, then for any ideal $J$ of $B$, $I+J$ has the persistence property (see Corollary \ref{cor_persistence}(ii)). In \cite[Corollary 1.7]{SW}, Swanson and Walker prove the same result under the stronger condition that $I$ is normal. It remains an open question whether for any ideal $I$ of $A$ with the persistence property and any ideal $J$ of $B$, the sum $I+J$ has same property.

The paper is structured as follows. In Section \ref{sect_assofquotient}, we provide large classes of ideals $I$ such that the equality $\Ass(A/I^n)=\Ass(I^{n-1}/I^n)$ holds true for all $n\ge 1$. An unexpected outcome of this study is a counterexample to \cite[Question 3.6]{AM}, on the vanishing of the map $\Tor^A_i(k,I^n)\to \Tor^A_i(k,I^{n-1})$. Namely in characteristic 2, we construct a quadratic ideal $I$ in $A$ such that the natural map $\Tor^A_*(k,I^2)\to \Tor^A_*(k,I)$ is not zero (even though $A/I$ is a Gorenstein Artinian ring, see Example \ref{ex_non-Torvanishing}). This example might be of independent interest, for example, it gives a negative answer to a question in \cite{AM}. Using the results in Section \ref{sect_assofquotient}, we give a set-theoretic upper bound and a lower bound for $\Ass(R/(I+J)^n)$ (Theorem \ref{thm_Asscontainments}). Theorem \ref{thm_Asscontainments} also gives an exact formula for the asymptotic primes of $I+J$ without any condition on $I$ and $J$. In the last section, we apply our results to the question on the persistence property raised by Swanson and Walker.
\section{Preliminaries} 
\label{sect_prelim}
For standard notions and results in commutative algebra, we refer to the books \cite{BH, Eis}.

Throughout the section, let $A$ and $B$ be two commutative Noetherian algebras over a field $k$ such that $R = A\otimes_k B$ is also Noetherian. Let $M$ and $N$ be two nonzero finitely generated modules over $A$ and $B$, respectively. Denote by $\Ass_A M$ and $\Min_A M$ the set of associated primes and minimal primes of $M$ as an $A$-module, respectively.  

By a filtration of ideals $(I_n)_{n\ge 0}$ in $A$, we mean the ideals $I_n, n\ge 0$ satisfies the conditions $I_0=A$ and $I_{n+1} \subseteq I_n$ for all $n\ge 0$. Let $(I_n)_{n\ge 1}$ and $(J_n)_{n\ge 1}$ be filtrations of ideals in $A$ and $B$, respectively. Consider the filtration $(W_n)_{n\ge 0}$ of $A\otimes_k B$ given by  $W_n= \sum_{i=0}^n I_iJ_{n-i}$. The following result is useful for the discussion in Section \ref{sect_asspr_powers}.
\begin{prop} [{\cite[Lemma 3.1, Proposition 3.3]{HNTT}}] 
\label{prop_decomposition}
For arbitrary ideals $I\subseteq A$ and $J\subseteq B$, we have $I\cap J=IJ$. Moreover with the above notation for filtrations, for any integer $n \ge 0$, there is an isomorphism
$$\displaystyle W_n/W_{n+1}
\cong \bigoplus_{i=0}^n\big(I_i/I_{i+1} \otimes_k J_{n-i}/J_{n-i+1}\big).$$
\end{prop}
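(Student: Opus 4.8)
The plan is to deduce both statements from the fact that every $k$-vector space is flat over $k$. For the first statement, fix $k$-vector space complements $A=I\oplus C$ and $B=J\oplus D$, so that $R=A\otimes_k B$ is the internal direct sum $(I\otimes_k J)\oplus(I\otimes_k D)\oplus(C\otimes_k J)\oplus(C\otimes_k D)$. Under this decomposition $IR=I\otimes_k B=(I\otimes_k J)\oplus(I\otimes_k D)$ and $JR=A\otimes_k J=(I\otimes_k J)\oplus(C\otimes_k J)$, whence $IR\cap JR=I\otimes_k J$; on the other hand $IR\cdot JR$ is spanned by the products $(x\otimes b)(a\otimes y)=xa\otimes by$ with $x\in I$, $y\in J$, $a\in A$, $b\in B$, so $IR\cdot JR=I\otimes_k J$ as well, giving $I\cap J=IJ$. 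The same argument shows $(V\otimes_k B)\cap(A\otimes_k W)=V\otimes_k W$ for arbitrary $k$-subspaces $V\subseteq A$, $W\subseteq B$, and that for any ideals the natural map identifies the product ideal $I_iR\cdot J_\ell R$ with the $k$-subspace $I_i\otimes_k J_\ell$ of $A\otimes_k B$. In particular $W_n=\sum_{i=0}^n(I_i\otimes_k J_{n-i})$ as a sum of $R$-submodules of $R$.

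For the second statement I would first write down the natural map. For $x\in I_i$ and $y\in J_{n-i}$ put $\Phi_i\bigl((x+I_{i+1})\otimes(y+J_{n-i+1})\bigr)=(x\otimes y)+W_{n+1}\in W_n/W_{n+1}$. Since $I_{i+1}\otimes_k J_{n-i}$ and $I_i\otimes_k J_{n-i+1}$ occur as the summands of index $i+1$ and $i$, respectively, in the defining sum of $W_{n+1}$, this prescription is $k$-bilinear and well defined, hence induces a $k$-linear map $(I_i/I_{i+1})\otimes_k(J_{n-i}/J_{n-i+1})\to W_n/W_{n+1}$; checking the rule on elements $a\otimes b\in R$ shows it is in fact $R$-linear. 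Assembling these produces an $R$-linear map $\Phi\colon\bigoplus_{i=0}^n(I_i/I_{i+1})\otimes_k(J_{n-i}/J_{n-i+1})\to W_n/W_{n+1}$, which is surjective because $I_iR$ (resp.\ $J_{n-i}R$) is generated over $R$ by $I_i\otimes 1$ (resp.\ $1\otimes J_{n-i}$), so that $W_n$ is generated over $R$ by the elements $x\otimes y$ with $x\in I_i$, $y\in J_{n-i}$, $0\le i\le n$.

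Injectivity of $\Phi$ is the remaining point, and the one demanding the most care. Choose $k$-subspaces $V_0,\dots,V_n$ of $A$ with $I_i=V_i\oplus I_{i+1}$ for $0\le i\le n$ and set $V_{n+1}=I_{n+1}$, so that $I_i=\bigoplus_{a=i}^{n+1}V_a$ for $0\le i\le n+1$; choose $U_0,\dots,U_n\subseteq B$ analogously with $U_{n+1}=J_{n+1}$. By distributivity of $\otimes_k$ over direct sums, $I_i\otimes_k J_\ell=\bigoplus_{a\ge i,\,b\ge\ell}V_a\otimes_k U_b$ inside $R=\bigoplus_{0\le a,b\le n+1}V_a\otimes_k U_b$, and summing over $i$ one verifies, by a short inspection of the index ranges near $n+1$, that
\[
W_m=\bigoplus_{\substack{0\le a,b\le n+1\\ a+b\ge m}}V_a\otimes_k U_b\qquad\text{for }m=n,\,n+1 .
\]
Hence $W_n/W_{n+1}$ is, as a $k$-vector space, the image of $\bigoplus_{a+b=n}V_a\otimes_k U_b$ under the quotient map, and this restriction is a $k$-isomorphism. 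Since the $k$-isomorphisms $V_i\xrightarrow{\ \sim\ }I_i/I_{i+1}$ and $U_{n-i}\xrightarrow{\ \sim\ }J_{n-i}/J_{n-i+1}$ identify this composite with $\Phi$, we conclude that $\Phi$ is bijective, hence an isomorphism of $R$-modules. The only genuine obstacle is thus the bookkeeping behind the displayed decomposition of $W_m$, in particular keeping the honest complements $V_0,\dots,V_n$ apart from the tail term $V_{n+1}=I_{n+1}$ (and similarly for the $U$'s), so that the edge summands are accounted for correctly.
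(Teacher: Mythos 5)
The paper does not prove Proposition~\ref{prop_decomposition}; it cites it from \cite{HNTT}, so there is no in-paper proof to compare against. Your argument is correct, and it is the natural one: since $k$ is a field, every $k$-linear inclusion splits, so you may choose complements adapted to the filtrations, obtain a bigraded $k$-vector space decomposition $R=\bigoplus_{a,b}V_a\otimes_k U_b$, and read off both $IR\cap JR=I\otimes_k J=IR\cdot JR$ and the identification $W_m=\bigoplus_{a+b\ge m}V_a\otimes_k U_b$ for $m=n,n+1$, from which the isomorphism follows. All the points that need care — that $I_jR\cdot J_{n-j}R=I_j\otimes_k J_{n-j}$ inside $R$, that $\Phi_i$ is well-defined because $I_{i+1}\otimes_kJ_{n-i}$ and $I_i\otimes_kJ_{n-i+1}$ land in $W_{n+1}$, that $\Phi$ is $R$-linear and not merely $k$-linear, and that the range of the index $j$ in $W_m$ is compatible with your truncation at $n+1$ — are handled correctly. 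This matches in spirit the flatness/complement technique used in \cite{HNTT}, so I would accept it as is.
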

We recall the following description of the associated primes of tensor products; see also \cite[Corollary 3.7]{STY}.
\begin{thm}[{\cite[Theorem 2.5]{HNTT}}] 
\label{thm_asso}
Let $M$ and $N$ be nonzero finitely generated modules over $A$ and $B$, respectively. Then there is an equality
\[
\Ass_R(M \otimes_k N) = \displaystyle \bigcup_{\begin{subarray}{l} \pp\in \Ass_A(M)\\
 \qq \in \Ass_B(N)\end{subarray}} \Min_R(R/\pp+\qq).
\]
\end{thm}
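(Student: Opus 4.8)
The plan is to establish the two inclusions separately; the inclusion ``$\supseteq$'' is formal, while ``$\subseteq$'' carries the content. For ``$\supseteq$'': given $\pp\in\Ass_A M$ and $\qq\in\Ass_B N$, fix embeddings $A/\pp\hookrightarrow M$ and $B/\qq\hookrightarrow N$. Since $k$ is a field, $-\otimes_k-$ is exact, so these induce an embedding $R/(\pp+\qq)\cong(A/\pp)\otimes_k(B/\qq)\hookrightarrow M\otimes_k N$ of $R$-modules; hence $\Min_R(R/(\pp+\qq))\subseteq\Ass_R(R/(\pp+\qq))\subseteq\Ass_R(M\otimes_k N)$, and taking the union over all $\pp,\qq$ yields the inclusion.

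For ``$\subseteq$'', fix $\mathfrak{P}\in\Ass_R(M\otimes_k N)$, say $\mathfrak{P}=\Ann_R(x)$ with $0\ne x\in M\otimes_k N$, and write $x=\sum_{j=1}^r m_j\otimes n_j$ with $r$ minimal, so that every $m_j$ is nonzero and $n_1,\dots,n_r$ are $k$-linearly independent. Put $\pp:=\mathfrak{P}\cap A$ and $\qq:=\mathfrak{P}\cap B$. The first step is to show $\pp\in\Ass_A M$ and $\qq\in\Ass_B N$: completing $n_1,\dots,n_r$ to a $k$-basis of $N$ identifies $M\otimes_k N$ with a direct sum of copies of $M$, under which $(a\otimes1)x$ has components $am_1,\dots,am_r$; thus $\pp=\bigcap_j\Ann_A(m_j)=\Ann_A(M_0)$ for the nonzero finitely generated submodule $M_0:=\sum_j Am_j$, and as $\pp$ is prime it is the unique minimal prime of $\Supp_A M_0=V(\pp)$, so $\pp\in\Min_A M_0\subseteq\Ass_A M_0\subseteq\Ass_A M$; symmetrically $\qq\in\Ass_B N$.

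It then remains to show that $\mathfrak{P}$ is minimal over $\pp+\qq$, and the key input is the following lemma: for any primes $\pp'\subseteq A$ and $\qq'\subseteq B$, the ring $(A/\pp')\otimes_k(B/\qq')\cong R/(\pp'+\qq')$ has no embedded primes, i.e.\ $\Ass_R(R/(\pp'+\qq'))=\Min_R(R/(\pp'+\qq'))$. Granting the lemma, choose prime filtrations of $M$ and $N$ with factors $A/\pp_i$ and $B/\qq_l$; tensoring over $k$ (an exact operation) produces a finite filtration of $M\otimes_k N$ with factors $R/(\pp_i+\qq_l)$, so $\Ass_R(M\otimes_k N)\subseteq\bigcup_{i,l}\Ass_R(R/(\pp_i+\qq_l))=\bigcup_{i,l}\Min_R(R/(\pp_i+\qq_l))$. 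Hence $\mathfrak{P}$ is minimal over some $\pp_i+\qq_l$; since $\pp_i\subseteq\mathfrak{P}\cap A=\pp$ and $\qq_l\subseteq\mathfrak{P}\cap B=\qq$, we have $\pp_i+\qq_l\subseteq\pp+\qq\subseteq\mathfrak{P}$, and minimality over $\pp_i+\qq_l$ forces minimality over $\pp+\qq$. Together with the first step this gives $\mathfrak{P}\in\Min_R(R/(\pp+\qq))$ with $\pp\in\Ass_A M$ and $\qq\in\Ass_B N$, completing the proof.

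The main obstacle is the lemma. The ring $C:=(A/\pp')\otimes_k(B/\qq')$ is free, hence flat, over the domain $A/\pp'$, so by flat base change for associated primes every prime in $\Ass_C C$ contracts to $(0)$ in $A/\pp'$, and by going-down the same holds for $\Min_C C$; symmetrically for $B/\qq'$. Localizing $C$ at $(A/\pp'\setminus\{0\})(B/\qq'\setminus\{0\})$ therefore reduces the lemma to the assertion that $K\otimes_k L$ has no embedded primes, where $K=\operatorname{Frac}(A/\pp')$ and $L=\operatorname{Frac}(B/\qq')$. When $A,B$ are polynomial rings, $K$ is a finitely generated field extension of $k$; writing $K$ as a finite extension of a purely transcendental subextension $F=k(t_1,\dots,t_d)$, the ring $F\otimes_k L$ is a localization of the polynomial ring $L[t_1,\dots,t_d]$, hence regular, and $K\otimes_k L=K\otimes_F(F\otimes_k L)$ is finite and free over it; a finite flat algebra over a Cohen--Macaulay ring (its fibres being necessarily Artinian) is Cohen--Macaulay, so $K\otimes_k L$ is Cohen--Macaulay and has no embedded primes. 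The points requiring care are the behaviour over an imperfect ground field $k$, where $K\otimes_k L$ need not be reduced, and, for the general Noetherian version of the statement, the case where $K$ or $L$ is not finitely generated over $k$, which is handled by a limit argument over finitely generated subfields.
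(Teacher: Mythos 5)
The paper states this result as a citation from \cite[Theorem 2.5]{HNTT} (see also \cite[Corollary 3.7]{STY}) and does not reproduce a proof, so there is no in-paper argument to compare against; what follows assesses your proof on its own terms. Your argument is essentially correct. The ``$\supseteq$'' inclusion via exactness of $\otimes_k$ is fine. For ``$\subseteq$'', the computation of $\pp=\mathfrak{P}\cap A$ as $\Ann_A(\sum_j Am_j)$ via a minimal-length tensor expression for $x$ and a $k$-basis of $N$, together with the conclusion $\pp\in\Min_A(M_0)\subseteq\Ass_A(M_0)\subseteq\Ass_A(M)$, is correct; likewise for $\qq$. The prime-filtration step, combined with the key lemma that $R/(\pp'+\qq')$ has no embedded primes, correctly forces $\mathfrak{P}$ to be minimal over $\pp+\qq$. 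Your reduction of the key lemma to the fraction-field case is also sound: flatness of $(A/\pp')\otimes_k(B/\qq')$ over the domain $A/\pp'$ (and symmetrically over $B/\qq'$) means multiplication by a nonzero element of either factor is injective, so every associated prime contracts to $(0)$ in each factor and survives the localization to $K\otimes_k L$; and the Noether-normalization argument (finite free over a regular localization of $L[t_1,\dots,t_d]$, with Artinian fibers) does show $K\otimes_k L$ is Cohen--Macaulay, hence has no embedded primes.

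The one genuine gap, which you flag yourself, is that the theorem is stated for arbitrary Noetherian $k$-algebras $A$, $B$ with $A\otimes_k B$ Noetherian, whereas your proof of the Cohen--Macaulayness of $K\otimes_k L$ requires $K$ (or $L$) to be finitely generated over $k$. This suffices for all applications in the present paper, where $A$ and $B$ are polynomial rings, but the asserted ``limit argument over finitely generated subfields'' for the general case is not carried out and is precisely the remaining nontrivial step: that a Noetherian tensor product of two field extensions of $k$ is Cohen--Macaulay (in fact a complete intersection) is a known but not elementary fact, and deserves either a careful argument or a reference.
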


The following simple lemma turns out to be useful in the sequel.
\begin{lem}
\label{lem_colon_containment}
Assume that $\chara k=0$. Let $A=k[x_1,\ldots,x_r]$ be a standard graded polynomial ring over $k$,  and $\mm$ its graded maximal ideal. Let $I$ be proper homogeneous ideal of $A$. Denote by $\partial(I)$ the ideal generated by partial derivatives of elements in $I$. Then there is a containment $I:\mm \subseteq \partial(I).$

In particular, $I^n:\mm \subseteq I^{n-1}$ for all $n\ge 1$. If for some $n\ge 2$, $\mm\in \Ass(A/I^n)$ then $\mm \in \Ass(I^{n-1}/I^n)$.
\end{lem}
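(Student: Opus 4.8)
The plan is to first establish the containment $I:\mm \subseteq \partial(I)$ and then read off the two consequences. Since $I$ and $\mm$ are homogeneous, $I:\mm$ is a homogeneous ideal, so it suffices to show that every homogeneous element $f \in I:\mm$, say of degree $d$, lies in $\partial(I)$. The key move is to differentiate $x_i f$: for each variable $x_i$ we have $x_i f \in \mm f \subseteq I$, hence $\partial(x_i f)/\partial x_i = f + x_i\,(\partial f/\partial x_i) \in \partial(I)$. Summing over $i = 1,\dots,r$ and invoking Euler's identity $\sum_{i} x_i\,(\partial f/\partial x_i) = d f$, I obtain $(r+d)f \in \partial(I)$; since $\chara k = 0$ the scalar $r+d \ge 1$ is invertible, so $f \in \partial(I)$. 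As $I:\mm$ is generated by such homogeneous elements, this gives $I:\mm \subseteq \partial(I)$.

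Next I would record the routine fact that $\partial(I^n) \subseteq I^{n-1}$ for all $n \ge 1$: writing a general element of $I^n$ as an $A$-linear combination of products $g_1\cdots g_n$ with $g_j \in I$, the Leibniz rule expresses each of its partial derivatives as a sum of terms, each a multiple of a product of $n-1$ of the $g_j$ (times possibly a partial of one factor), hence each lying in $I^{n-1}$. Applying the first part to the ideal $I^n$, which is again proper and homogeneous, then yields
\[
I^n:\mm \subseteq \partial(I^n) \subseteq I^{n-1}
\]
for every $n \ge 1$ (the case $n=1$ reading $I:\mm \subseteq A$, which is trivial).

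For the last assertion, suppose $\mm \in \Ass(A/I^n)$ for some $n \ge 2$, so $\mm = I^n : f$ for some $f \in A\setminus I^n$. By the previous step $f \in I^n:\mm \subseteq I^{n-1}$, while $\mm f \subseteq I^n$; hence the image $\bar f$ of $f$ in $I^{n-1}/I^n$ is nonzero and $\Ann_A(\bar f) = I^n:f = \mm$. Since $\mm$ is maximal, this shows $\mm \in \Ass(I^{n-1}/I^n)$.

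All the computations involved are short; the only genuinely clever point is the differentiation-of-$x_i f$ trick combined with Euler's identity in the proof of $I:\mm \subseteq \partial(I)$, and the sole place where the hypothesis $\chara k = 0$ is actually used is the invertibility of the integer $r+d$ in $k$.
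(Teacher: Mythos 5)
Your proposal is correct and takes essentially the same approach as the paper: differentiate $x_i f$, sum over $i$ and apply Euler's identity to conclude $(r+\deg f)f \in \partial(I)$, use the product rule for $\partial(I^n) \subseteq I^{n-1}$, and derive the last assertion from $a \in (I^n:\mm)\setminus I^n \subseteq I^{n-1}\setminus I^n$. Your explicit reduction to homogeneous $f$ (so that $\deg f$ is defined) is a small clarification the paper leaves implicit, but the argument is the same.
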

\begin{proof}
Take $f\in I:\mm$. Then $x_if\in I$ for every $i=1,\ldots,r$. Taking partial derivatives, we get $f+x_i(\partial f/\partial x_i) \in \partial(I)$. Summing up and using Euler's formula, $(r+\deg f)f\in \partial(I)$. As $\chara k=0$, this yields $f\in \partial(I)$, as claimed.

The second assertion holds since by the product rule, $\partial(I^n)\subseteq I^{n-1}$.

If $\mm \in \Ass(A/I^n)$ then there exists an element $a\in (I^n:\mm)\setminus I^n$. Thus $a\in I^{n-1}\setminus I^n$, so $\mm \in \Ass(I^{n-1}/I^n)$.
\end{proof}
The condition on the characteristic is indispensable: The inclusion $I^2:\mm \subseteq I$ may fail in positive characteristic; see Example \ref{ex_non-Torvanishing}.

The following lemma will be employed several times in the sequel. Denote by $\gr_I(A)$ the associated graded ring of $A$ with respect to the $I$-adic filtration.
\begin{lem}
\label{lem_sufficient_forpersistence}
Let $A$ be a Noetherian ring, and $I$ an ideal. Then the following are equivalent:
\begin{enumerate}[\quad \rm (i)]
\item $I^{n+1}:I=I^n$ for all $n\ge 1$,
\item $(I^{n+1}:I)\cap I^{n-1}=I^n$ for all $n\ge 1$,
\item $\depth \gr_I(A)>0$,
\item $I^n=\widetilde{I^n}$ for all $n\ge 1$, where $\widetilde{I}=\bigcup\limits_{i\ge 1}(I^{i+1}:I^i)$ denotes the Ratliff-Rush closure of $I$.
\end{enumerate}
If one of these equivalent conditions holds, then $\Ass(A/I^n)\subseteq \Ass(A/I^{n+1})$ for all $n\ge 1$, namely $I$ has the \textup{persistence property}.
\end{lem}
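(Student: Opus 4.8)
I would first set up the chain of equivalences (i) $\Leftrightarrow$ (ii) $\Leftrightarrow$ (iv) $\Leftrightarrow$ (iii) by elementary arguments and a classical citation, and then deduce persistence from condition (i).

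For (i) $\Leftrightarrow$ (ii): one direction is trivial because $I^n\subseteq I^{n-1}$. For (ii) $\Rightarrow$ (i) I would induct on $n$: the case $n=1$ is (ii) itself (with $I^0=A$), and in the inductive step any $x\in I^{n+1}:I$ satisfies $xI\subseteq I^{n+1}\subseteq I^n$, hence $x\in I^n:I=I^{n-1}$ by induction, hence $x\in(I^{n+1}:I)\cap I^{n-1}=I^n$ by (ii). For (i) $\Leftrightarrow$ (iv): assuming (i), an easy induction on $j$ gives $I^m:I^j=I^{m-j}$ for all $m\ge j\ge 0$, so every colon $(I^n)^{j+1}:(I^n)^j$ equals $I^n$ and therefore $\widetilde{I^n}=I^n$; conversely, if (iv) holds and $x\in I^{n+1}:I$, then $xI^n=xI\cdot I^{n-1}\subseteq I^{2n}=(I^n)^2$, so $x\in (I^n)^2:(I^n)\subseteq\widetilde{I^n}=I^n$.

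The equivalence (iv) $\Leftrightarrow$ (iii) is essentially the classical criterion of Ratliff and Rush; I would either cite it or argue as follows. Assume $I\ne A$ (the case $I=A$ being vacuous), so that $\gr_I(A)$ is standard graded over $\gr_I(A)_0=A/I\ne 0$ and (iii) is equivalent to $H^0_{\gr_I(A)_+}(\gr_I(A))=0$. The crucial computation is that the degree-$d$ component of this local cohomology module is $(\widetilde{I^{d+1}}\cap I^d)/I^{d+1}$: a class $y+I^{d+1}$ is annihilated by a power of $\gr_I(A)_+$ exactly when $I^m y\subseteq I^{m+d+1}$ for $m\gg 0$, and this unwinds, via the definition of the Ratliff--Rush closure, to $y\in\widetilde{I^{d+1}}$. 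Hence (iii) holds iff $\widetilde{I^{d+1}}\cap I^d=I^{d+1}$ for all $d\ge 0$; using that $\widetilde{I^n}=\{y: yI^m\subseteq I^{m+n}\text{ for }m\gg 0\}$ is decreasing in $n$, a short induction starting from $\widetilde I=I$ turns this into $\widetilde{I^n}=I^n$ for all $n$, i.e.\ (iv). I expect this to be the main point requiring care: one must keep in mind that $\depth\gr_I(A)$ means depth with respect to $\gr_I(A)_+$, that $\gr_I(A)$ is in general not local, and that $\gr_I(A)_+$ possessing a nonzerodivisor does \emph{not} force a linear one to exist.

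Finally I would deduce persistence from (i). Fix $P\in\Ass(A/I^n)$. Since formation of associated primes and of colons by finitely generated ideals commutes with localization at $P$, I may replace $A$ by $A_P$ and assume $(A,\mm)$ is local with $\mm=P$, so that $I\subseteq\mm$ and (i) still holds. Choose $a\in A\setminus I^n$ with $a\mm\subseteq I^n$. Then $aI\subseteq a\mm\subseteq I^n$; were $aI\subseteq I^{n+1}$ we would get $a\in I^{n+1}:I=I^n$ by (i), a contradiction, so there is $x\in I$ with $ax\notin I^{n+1}$. Now $(ax)\mm=x\,(a\mm)\subseteq xI^n\subseteq I^{n+1}$, so $\mm\subseteq I^{n+1}:(ax)$, and this colon is a proper ideal because $ax\notin I^{n+1}$; hence $I^{n+1}:(ax)=\mm$ and $\mm\in\Ass(A/I^{n+1})$. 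This avoids looking for a regular element of $\gr_I(A)$ — which, as noted above, need not be linear — by exploiting that $P$ has become maximal after localization.
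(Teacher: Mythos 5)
Your argument is correct and proves considerably more than the paper does directly. The paper only spells out (i) $\Leftrightarrow$ (ii) (by the same inductive reindexing you use), and then disposes of (i) $\Leftrightarrow$ (iii) $\Leftrightarrow$ (iv) and the persistence consequence by citation to Heinzer--Lantz--Shah, Rossi--Swanson, and Herzog--Qureshi respectively.

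Where you genuinely diverge: you give a short self-contained proof of (i) $\Leftrightarrow$ (iv) via the identity $I^m : I^j = I^{m-j}$, and --- more notably --- you deduce persistence from (i) by a clean localization argument rather than by citation. Localizing at $P \in \Ass(A/I^n)$ reduces to a local ring with $P=\mm$ maximal; then picking $a$ with $I^n:a=\mm$, observing $a\notin I^{n+1}:I=I^n$ so $ax\notin I^{n+1}$ for some $x\in I$, and computing $(ax)\mm\subseteq I^{n+1}$ is exactly right and entirely elementary. This argument sidesteps the subtlety you flag (that $\gr_I(A)_+$ may contain a nonzerodivisor without containing a \emph{linear} one), since you never need a regular element of $\gr_I(A)$ at all. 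The only piece you leave as a sketch is (iv) $\Leftrightarrow$ (iii) via the $H^0_{\gr_+}$ computation, which is fine given that you explicitly offer the citation as an alternative; if you wanted to complete it you would need to justify, as you indicate, the characterization $\widetilde{I^n}=\{y: yI^m\subseteq I^{m+n}\text{ for }m\gg 0\}$, which requires a little care in the absence of a nonzerodivisor in $I$. In short: same skeleton as the paper, but with the cited ingredients unpacked into elementary arguments --- a reasonable trade of length for self-containedness.
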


\begin{proof}
Clearly (i) $\Longrightarrow$ (ii). We prove that (ii) $\Longrightarrow$ (i).

Assume that $(I^{n+1}:I)\cap I^{n-1}=I^n$ for all $n\ge 1$. We prove by induction on $n\ge 1$ that $I^n:I=I^{n-1}$.

If $n=1$, there is nothing to do. Assume that $n\ge 2$. By the induction hypothesis, $I^n:I \subseteq I^{n-1}:I=I^{n-2}$. Hence $I^n:I=(I^n:I)\cap I^{n-2}=I^{n-1}$, as desired.

That (i) $\Longleftrightarrow$ (iii) $\Longleftrightarrow$ (iv) follows from \cite[(1.2)]{HLS} and \cite[Remark 1.6]{RS}.

The last assertion follows from \cite[Section 1]{HQu}, where the property $I^{n+1}:I=I^n$ for all $n\ge 1$, called the \emph{strong persistence property} of $I$, was discussed.
\end{proof}

\section{Associated primes of quotients of consecutive powers}
\label{sect_assofquotient}

The following question is quite relevant to the task of finding the associated primes of powers of sums.
\begin{quest}
\label{quest_ass}
Let $A$ be a standard graded polynomial ring over a field $k$ (of characteristic zero), and $I$ a proper homogeneous ideal. Is it true that
\[
\Ass(A/I^n)=\Ass(I^{n-1}/I^n) \quad \text{for all $n\ge 1$?}
\]
\end{quest}

We are not aware of any ideal not satisfying the equality in Question \ref{quest_ass} (even in positive characteristic). In the first main result of this paper, we provide some evidence for a positive answer to Question \ref{quest_ass}. Denote by $\Rees(I)$ the Rees algebra of $I$. The ideal $I$ is said to be \emph{unmixed} if it has no embedded primes. It is called \emph{normal} if all of its powers are integrally closed ideals.
\begin{thm}
 \label{thm_specialcase_ass}
Question \ref{quest_ass} has a positive answer if any of the following conditions holds:
\begin{enumerate}[\quad \rm(1)]
 \item $I$ is a monomial ideal.
 \item $\depth \gr_I(A) \ge 1$.
\item $\depth \Rees(I)\ge 2$.
 \item $I$ is normal.
 \item $I^n$ is unmixed for all $n\ge 1$, e.g. $I$ is generated by a regular sequence.
 \item All the powers of $I$ are primary, e.g. $\dim(A/I)=0$.
 \item $\chara k=0$ and $\dim(A/I)\le 1$.
 \item $\chara k=0$ and $\dim A\le 3$.
\end{enumerate}
\end{thm}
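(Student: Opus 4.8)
The plan is to reduce every case to controlling the \emph{embedded} associated primes of $I^n$. Two facts come for free. First, the short exact sequence $0\to I^{n-1}/I^n\to A/I^n\to A/I^{n-1}\to 0$ gives $\Ass(I^{n-1}/I^n)\subseteq\Ass(A/I^n)$, so only the reverse containment is at issue. Second, $\Min(A/I)=\Min(A/I^n)\subseteq\Ass(I^{n-1}/I^n)$ for all $n$: localizing at $\qq\in\Min(A/I)$, the module $I^{n-1}A_\qq/I^nA_\qq$ is nonzero (Nakayama, using that $A_\qq$ is a domain and $I\neq 0$) and is killed by the $\qq A_\qq$-primary ideal $I^nA_\qq$, hence has finite length, so $\qq A_\qq\in\Ass$. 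It therefore suffices to show that every prime in $\Ass(A/I^n)\setminus\Min(A/I)$ lies in $\Ass(I^{n-1}/I^n)$. This already settles (5), where $\Ass(A/I^n)=\Min(A/I)$, and (6), where $\Ass(A/I^n)=\{\sqrt I\}$ while $\Ass(I^{n-1}/I^n)$ is a nonempty subset of it; the two parenthetical cases reduce to these, since powers of a regular sequence are unmixed, and $\dim(A/I)=0$ forces $\sqrt I=\mm$, so every power is $\mm$-primary.

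For (2)--(4) I would argue with colons. By Lemma~\ref{lem_sufficient_forpersistence}, the hypothesis $\depth\gr_I(A)\geq 1$ in (2) is equivalent to $I^{n+1}:I=I^n$ for all $n$. Given $\pp\in\Ass(A/I^n)$, write $\pp=I^n:a$; since $I\subseteq\sqrt{I^n}\subseteq\pp$ we get $aI\subseteq I^n$, i.e. $a\in I^n:I=I^{n-1}$, so $\pp=I^n:a$ with $a\in I^{n-1}\setminus I^n$, hence $\pp\in\Ass(I^{n-1}/I^n)$. This proves (2). For (3) I would deduce $\depth\gr_I(A)\geq 1$ from $\depth\Rees(I)\geq 2$ using the exact sequences $0\to\Rees(I)_+\to\Rees(I)\to A\to 0$, the degree-shift isomorphism $\Rees(I)_+\cong I\cdot\Rees(I)$, and $0\to I\cdot\Rees(I)\to\Rees(I)\to\gr_I(A)\to 0$, together with $\depth_{\Rees(I)}A=\dim A\geq 1$, and then apply (2). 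For (4), the Rees algebra of a normal ideal in the normal ring $A$ is a normal domain, hence satisfies Serre's condition $(S_2)$, so $\depth\Rees(I)\geq\min(2,\dim A+1)\geq 2$, and (3) applies. (The cases $\dim A\leq 1$ fall under (5).)

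For (1) I would first reduce to $\pp=\mm$. For a monomial prime $P_W=(x_i\mid i\in W)$, monomial localization (inverting the monomials not in $P_W$) identifies $A_{P_W}/I^nA_{P_W}$ with $(A_W/I_W^n)\otimes_k L$, where $A_W=k[x_i\mid i\in W]$, $I_W$ is the image of $I$ under $x_j\mapsto 1$ $(j\notin W)$, and $L$ is a localization of a polynomial ring over $k$; this yields $P_W\in\Ass(A/I^n)\iff\mm_W\in\Ass(A_W/I_W^n)$, and the same with $I^{n-1}/I^n$ in place of $A/I^n$. So assume $\pp=\mm$ and pick a monomial $u\notin I^n$ with $I^n:u=\mm$, so $x_iu\in I^n$ for every $i$. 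Fixing any variable $x_1$, some product $G$ of $n$ monomial generators of $I$ divides $x_1u$, and $G\nmid u$ (else $u\in I^n$), so $x_1\mid G$ (for monomials, $G\mid x_1u$ together with $x_1\nmid G$ forces $G\mid u$). One of the $n$ generators composing $G$ is then divisible by $x_1$, so $G/x_1\in I^{n-1}$, while $G/x_1\mid u$; thus $u\in I^{n-1}$. Hence $\mm=I^n:u$ with $u\in I^{n-1}\setminus I^n$, so $\mm\in\Ass(I^{n-1}/I^n)$.

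For (7) and (8) I would combine a dimension count with Lemma~\ref{lem_colon_containment}, which (using $\chara k=0$) gives $\mm\in\Ass(A/I^n)\Rightarrow\mm\in\Ass(I^{n-1}/I^n)$. If $\dim(A/I)\leq 1$, every embedded prime of $I^n$ strictly contains a minimal prime of dimension $\leq 1$, hence (being homogeneous) has dimension $0$ and equals $\mm$; this proves (7). For (8), with $\dim A\leq 3$ and $I\neq 0$, the remaining case is $\dim(A/I)=2$; there $I$ lies in a height-one prime, so an embedded prime of $I^n$ is either $\mm$ or a height-two prime $\pp$ strictly containing a height-one minimal prime of $I$. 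Since $A$, hence $A_\pp$, is a UFD, write $IA_\pp=fJ$ with $f$ the greatest common divisor of the generators and $J$ the resulting ideal of height $\geq 2$, so $J$ is $\pp A_\pp$-primary (it is not the unit ideal, for otherwise $IA_\pp$ would be principal and $\pp A_\pp$, of height two, could not be associated to $A_\pp/I^nA_\pp$); then $I^{n-1}A_\pp/I^nA_\pp\cong J^{n-1}/fJ^n$. Assuming $\pp A_\pp\in\Ass(A_\pp/f^nJ^n)$, pick $b$ with $(\pp A_\pp)b\subseteq f^nJ^n$ and $b\notin f^nJ^n$; since $f\in\pp A_\pp$ and $A_\pp$ is a domain, cancelling powers of $f$ produces $c\in J^n\subseteq J^{n-1}$ with $c\notin fJ^n$ and $(\pp A_\pp)c\subseteq fJ^n$, so $\pp A_\pp\in\Ass(J^{n-1}/fJ^n)$, i.e. $\pp\in\Ass(I^{n-1}/I^n)$. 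The main obstacle is exactly this last step of (8): one must first pin down that the height-two embedded primes are the only ones not already handled by the dimension count and Lemma~\ref{lem_colon_containment}, and then verify that the UFD factorization and the cancellation in the localization go through cleanly.
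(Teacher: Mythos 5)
Your proof is correct and, in several of the cases, takes a genuinely different route from the paper's. For case (2) you give a direct one-line argument: from $\pp=I^n:a$ and $I\subseteq\sqrt{I^n}\subseteq\pp$ one gets $a\in I^n:I=I^{n-1}$, so $\pp=\Ann(\bar a)$ already witnesses $\pp\in\Ass(I^{n-1}/I^n)$; the paper instead inducts via the injection $I^{n-2}/I^{n-1}\hookrightarrow(I^{n-1}/I^n)^{\oplus m}$, and your version is cleaner. For (3) you replace the citation of Huckaba--Marley with a self-contained depth chase through the two standard exact sequences for $\Rees(I)$, obtaining the weaker but sufficient bound $\depth\gr_I(A)\ge\min(\depth\Rees(I)-1,\dim A)$; for (4) you route through normality of $\Rees(I)$ and Serre's $(S_2)$ into case (3), whereas the paper goes directly to the Ratliff--Rush identity $I^{n+1}:I=I^n$ and lands in (2). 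For (1) you supply a direct combinatorial proof via monomial localization and divisibility where the paper cites Morey--Villarreal. For (8) the paper factors $I=xL$ globally and uses the exact sequences $0\to A/L^n\xrightarrow{\cdot x^n} A/I^n\to A/(x^n)\to 0$ and $0\to L^{n-1}/L^n\xrightarrow{\cdot x^n} I^{n-1}/I^n$, whereas you localize at a height-two embedded prime $\pp$, factor $IA_\pp=fJ$ in the two-dimensional regular local (hence UFD) ring $A_\pp$, and cancel powers of $f$ to transport a witness from $A_\pp/I^nA_\pp$ to $J^{n-1}/fJ^n\cong I^{n-1}A_\pp/I^nA_\pp$; this is the local rendering of the paper's global argument and goes through cleanly, so the uncertainty you flag at the end is unwarranted. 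Cases (5)--(7) coincide with the paper's. The only caveat is that your (1), (3), (4) lean on standard but uncited facts (monomial localization preserving associated primes, normality of $\Rees(I)$ for a normal ideal, $\depth_{\Rees(I)}A=\dim A$), which should be referenced; the paper avoids these by citing Huckaba--Marley and using the Ratliff--Rush characterization directly.
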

\begin{proof}
(1):  See \cite[Lemma 4.4]{MV}. 

(2): By Lemma \ref{lem_sufficient_forpersistence}, since $\depth \gr_I(A)\ge 1$, $I^n:I=I^{n-1}$ for all $n\ge 1$. Induce on $n\ge 1$ that $\Ass(A/I^n)=\Ass(I^{n-1}/I^n)$. 

Let $I=(f_1,\ldots,f_m)$. For $n\ge 2$, as $I^n:I=I^{n-1}$, the map 
$$
I^{n-2} \to \underbrace{I^{n-1}\oplus \cdots \oplus I^{n-1}}_{m \, \text{times}}, a\mapsto (af_1,\ldots,af_m),
$$
induces an injection
\[
\frac{I^{n-2}}{I^{n-1}} \hookrightarrow \left(\frac{I^{n-1}}{I^n}\right)^{\oplus m}.
\]
Hence $\Ass(A/I^{n-1})=\Ass(I^{n-2}/I^{n-1}) \subseteq \Ass(I^{n-1}/I^n)$. The exact sequence
\[
0\to I^{n-1}/I^n \to A/I^n \to A/I^{n-1} \to 0
\]
then yields $\Ass(A/I^n)\subseteq \Ass(I^{n-1}/I^n)$, which in turn implies the desired equality.

Next we claim that (3) and (4) all imply (2).

(3) $\Longrightarrow$ (2): This follows from a result of Huckaba and Marley \cite[Corollary 3.12]{HuMa} which says that either $\gr_I(A)$ is Cohen-Macaulay (and hence has depth $A= \dim A$), or $\depth \gr_I(A)=\depth \Rees(I)-1$.

(4) $\Longrightarrow$ (2): If $I$ is normal, then $I^n:I=I^{n-1}$ for all $n\ge 1$. Hence we are done by Lemma \ref{lem_sufficient_forpersistence}.

(5): Take $P\in \Ass(A/I^n)$, we show that $P\in \Ass(I^{n-1}/I^n)$. Since $A/I^n$ is unmixed, $P \in \Min(A/I^n)= \Min(I^{n-1}/I^n)$.

Observe that (6) $\Longrightarrow$ (5).

(7): Because of (6), we can assume that $\dim(A/I)=1$. Take $P\in \Ass(A/I^n)$, we need to show that $P\in \Ass(I^{n-1}/I^n)$.

If $\dim(A/P)=1$, then as $\dim(A/I)=1$, $P\in \Min(A/I^n)$. Arguing as for case (5), we get $P\in \Ass(I^{n-1}/I^n)$.

If $\dim(A/P)=0$, then $P=\mm$, the graded maximal ideal of $A$. Since $\mm \in \Ass(A/I^n)$, by Lemma \ref{lem_colon_containment}, $\mm \in \Ass(I^{n-1}/I^n)$. 

(8) It is harmless to assume that $I\neq 0$. If $\dim(A/I)\le 1$ then we are done by (7). Assume that $\dim(A/I)\ge 2$, then the hypothesis forces $\dim A=3$ and $\htt I=1$. Thus we can write $I=xL$ where $x$ is a form of degree at least 1, and $L=R$ or $\htt L\ge 2$. The result is clear when $L=R$, so it remains to assume that $L$ is proper of height $\ge 2$. In particular $\dim(A/L)\le 1$, and by (7), for all $n\ge 1$,
\[
\Ass(A/L^n)=\Ass(L^{n-1}/L^n).
\]
Take $\pp \in \Ass(A/I^n)$. Since $A/I^n$ and $I^{n-1}/I^n$ have the same minimal primes, we can assume $\htt \pp \ge 2$. From the exact sequence
\[
0\to A/L^n \xrightarrow{\cdot x^n} A/I^n \to A/(x^n) \to 0
\]
it follows that $\pp \in \Ass(A/L^n)$. Thus $\pp \in \Ass(L^{n-1}/L^n)$. There is an exact sequence
\[
0\to L^{n-1}/L^n \xrightarrow{\cdot x^n}  I^{n-1}/I^n
\]
so $\pp \in \Ass(I^{n-1}/I^n)$, as claimed. This concludes the proof.
\end{proof}

\begin{ex}
Here is an example of a ring $A$ and an ideal $I$ not satisfying any of the conditions (1)--(8) in Theorem \ref{thm_specialcase_ass}.
Let $I=(x^4+y^3z,x^3y,x^2t^2,y^4,y^2z^2) \subseteq A=k[x,y,z,t]$. Then $\depth \gr_I(A)=0$ as $x^2y^3z\in (I^2:I)\setminus I$. So $I$ satisfies neither (1) nor (2).

Note that $\sqrt{I}=(x,y)$, so $\dim(A/I)=2$. Let $\mm=(x,y,z,t)$. Since $x^2y^3zt\in (I:\mm)\setminus I$, $\depth(A/I)=0$, hence $A/I$ is not unmixed. Thus $I$ satisfies neither (5) nor (7). By the proof of Theorem \ref{thm_specialcase_ass}, $I$ satisfies none of the conditions (3), (4), (6). 

Unfortunately, experiments with Macaulay2 \cite{GS} suggest that $I$ satisfies the conclusion of Question \ref{quest_ass}, namely for all $n\ge 1$,
\[
\Ass(A/I^n)=\Ass(I^{n-1}/I^n)=\{(x,y),(x,y,z),(x,y,t),(x,y,z,t)\}.
\]
\end{ex}

\begin{rem}
In view of Lemma \ref{lem_colon_containment} and Question \ref{quest_ass}, one might ask whether if $\chara k=0$, then $\Ass(A/I)=\Ass(\partial(I)/I)$ for any homogeneous ideal $I$ in a polynomial ring $A$?

Unfortunately, this has a negative answer. Let $A=\mathbb{Q}[x,y,z], f=x^5+x^4y+y^4z, L=(x,y)$ and $I=fL$. Then we can check with Macaulay2 \cite{GS} that $\partial(I):f=L$. In particular,
\[
\Ass (\partial(I)/I) =(f) \neq \Ass(A/I)=\{(f),(x,y)\}.
\]
Indeed, if $L=(x,y)\in \Ass(\partial(I)/I)$ then $\Hom_R(R/L,\partial(I)/I)=(\partial(I)\cap (I:L))/I=(\partial(I)\cap (f))/I\neq 0$, so that $\partial(I):f\neq L$, a contradiction.

\end{rem}

\subsection{Partial answer to Question \ref{quest_ass} in dimension four}
We prove that if $\chara k=0$ and $\dim A=4$, the equality $\Ass(A/I^2)=\Ass(I/I^2)$ always holds, in support of a positive answer to Question \ref{quest_ass}. The proof requires the structure theory of perfect Gorenstein ideals of height three and their second powers.
\begin{thm}
\label{thm_dim4_nequals2}
Assume $\chara k=0$. Let $(A,\mm)$ be a four dimensional standard graded polynomial ring over $k$. Then for any proper homogeneous ideal $I$ of $A$, there is an equality $\Ass(A/I^2)=\Ass(I/I^2)$.
\end{thm}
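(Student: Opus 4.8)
The plan is to prove the inclusion $\Ass(A/I^2)\subseteq \Ass(I/I^2)$, the reverse one being automatic since $I/I^2$ is a submodule of $A/I^2$. As $I\neq 0$ and $A$ is a domain we have $\Supp(I/I^2)=V(I)=\Supp(A/I^2)$, so $\Min(A/I^2)=\Min(I/I^2)$ and only the \emph{embedded} primes of $A/I^2$ need attention. Fixing such a prime $\pp$ and localizing at it (associated primes localize), the statement reduces to: \emph{if $(A,\mm)$ is a regular local ring with $\dim A\le 4$ and $\mm\in\Ass(A/I^2)$ is embedded (so $\htt I<\dim A$, whence $\htt I\le 3$), then $\mm\in\Ass(I/I^2)$.}

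The first reduction is via a socle sequence. Applying $\Hom_A(k,-)$, $k=A/\mm$, to $0\to I/I^2\to A/I^2\to A/I\to 0$ gives an exact sequence $0\to \operatorname{soc}(I/I^2)\to\operatorname{soc}(A/I^2)\to\operatorname{soc}(A/I)$. If $\depth(A/I)>0$ then $\operatorname{soc}(A/I)=0$, hence $\operatorname{soc}(I/I^2)\cong\operatorname{soc}(A/I^2)\neq 0$ and we are done; so we may assume $\depth(A/I)=0$, and then $\operatorname{pd}_A(A/I)=\dim A$ by Auslander--Buchsbaum. Since $\mm$ is embedded, $\dim(A/I)\ge 1$ and $A/I$ is not Cohen--Macaulay. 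The cases $\htt I\le 2$, being of codimension at most two, are handled by Hilbert--Burch and the structure theory of powers of perfect codimension-two ideals (with a reduction of the ambient dimension); the cases $\dim A\le 3$ are settled by arguments parallel to those of Theorem \ref{thm_specialcase_ass}(7)--(8). What remains is the essential case: $A$ a four-dimensional regular local ring, $\htt I=3$, $\dim(A/I)=1$, $\depth(A/I)=0$, $\mm\in\Ass(A/I^2)$; one must show $\mm\in\Ass(I/I^2)$.

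For this, I would first separate $I$ from its embedded component: write $I=J\cap Q$, where $J=I:_A\mm^\infty$ is the intersection of the height-three primary components and $Q$ is the $\mm$-primary one, so that $J/I$ and $J^2/I^2$ have finite length. As $J$ is unmixed of height three with $\dim(A/J)=1$, it has no embedded prime; hence $A/J$ is Cohen--Macaulay and $J$ is a perfect ideal of height three. When $A/J$ is Gorenstein, the Buchsbaum--Eisenbud structure theorem exhibits $J$ as the ideal of submaximal Pfaffians of a skew-symmetric matrix, and Kustin and Ulrich \cite{KU} provide an explicit minimal free resolution of $A/J^2$ (indeed of every $A/J^s$, of the symmetric and Rees algebras, and more). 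Reading $\depth(A/J^2)$, hence $\operatorname{soc}(A/J^2)$, off that resolution, applying the socle sequence to $0\to J/J^2\to A/J^2\to A/J\to 0$ (where $\operatorname{soc}(A/J)=0$), and then propagating the conclusion back to $I$ along the finite-length extensions $J/I$ and $J^2/I^2$, should yield $\operatorname{soc}(I/I^2)\neq 0$, i.e. $\mm\in\Ass(I/I^2)$. The general perfect, possibly non-Gorenstein, height-three case is treated by the same resolution-theoretic machinery of Kustin--Ulrich.

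The main obstacle is precisely this last step. Away from the graded maximal ideal there is no analogue of Lemma \ref{lem_colon_containment}, and once $A/I$ fails to be Cohen--Macaulay the socle sequence no longer collapses, so one genuinely needs the minimal free resolution of the second power $I^2$; this is exactly what the Buchsbaum--Eisenbud classification of perfect Gorenstein height-three ideals and Kustin--Ulrich's resolutions of their powers deliver. The delicate point is the bookkeeping in the passage between $I$ and its unmixed part $J$: the finite-length module $J^2/I^2$ and the two socles $\operatorname{soc}(A/I^2)$ and $\operatorname{soc}(I/I^2)$ must be controlled simultaneously.
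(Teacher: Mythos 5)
There is a genuine gap, and the reductions run in the wrong order. Your instinct that the problem ultimately comes down to Kustin--Ulrich style resolutions of powers and a Tor/socle-vanishing argument is on target, but the way you set up the problem before applying that machinery cannot succeed.

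The first issue is that you localize immediately, and from that point on you are in a regular local ring where the graded Euler-formula containment $I^n:\mm\subseteq I^{n-1}$ of Lemma~\ref{lem_colon_containment} is simply unavailable --- as you yourself note at the end. But you also claim that the cases $\dim A\le 3$ are ``settled by arguments parallel to Theorem~\ref{thm_specialcase_ass}(7)--(8),'' and those arguments hinge precisely on Lemma~\ref{lem_colon_containment}. After localization there is no such parallel; the $3$-dimensional local statement (Lemma~\ref{lem_colon_dim3} in the paper) already requires the full Kustin--Ulrich argument and is \emph{not} an easy consequence of the graded case. Related to this, you single out $\dim A=4$, $\htt I=3$ as the essential case. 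In the paper this is the \emph{easy} case: $\dim(A/I)\le 1$ and Theorem~\ref{thm_specialcase_ass}(7) disposes of it globally, before any localization, using exactly the graded Euler argument you have given up. The genuinely hard case is $\htt I=2$, after which the embedded prime (once $\mm$ is ruled out globally by Lemma~\ref{lem_colon_containment}) has height $3$, so localizing puts you in a $3$-dimensional regular local ring --- the right ambient dimension for the next step.

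The second issue is the decomposition. You split $I=J\cap Q$ into its height-three unmixed part $J$ and an $\mm$-primary part $Q$. The unmixed part $J$ is Cohen--Macaulay but in general \emph{not} Gorenstein, and your closing sentence --- that ``the general perfect, possibly non-Gorenstein, height-three case is treated by the same resolution-theoretic machinery of Kustin--Ulrich'' --- is not justified and, as far as I can see, not true: the Kustin--Ulrich resolutions of powers that the paper uses are built from an alternating matrix, i.e.\ from the Buchsbaum--Eisenbud Gorenstein structure, and there is no analogous explicit resolution of $J^2$ for an arbitrary perfect height-three ideal. The paper's decomposition is into \emph{irreducible} ideals: an irreducible $\mm$-primary ideal in a regular local ring has one-dimensional socle, hence is Gorenstein, and in the $3$-dimensional local ring its codimension is exactly $3$, so it falls squarely within the Buchsbaum--Eisenbud/Kustin--Ulrich framework. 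Decomposing into irreducibles rather than into ``unmixed $\cap$ $\mm$-primary'' is what delivers the Gorenstein hypothesis for free; your decomposition does not. Finally, the ``propagation'' from $J$ back to $I$ along the finite-length modules $J/I$ and $J^2/I^2$ is asserted but not carried out, and it is not at all clear that $\operatorname{soc}(J/J^2)\neq 0$ forces $\operatorname{soc}(I/I^2)\neq 0$.
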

\begin{proof}
It is harmless to assume $I$ is a proper ideal. If $\htt I\ge 3$ then $\dim(A/I)\le 1$, and we are done by Theorem \ref{thm_specialcase_ass}(7).

If $\htt I=1$, then $I=fL$, where $f\in A$ is a form of positive degree and $\htt L\ge 2$. The exact sequence
\[
0 \to \frac{A}{L} \xrightarrow{\cdot f} \frac{A}{I} \to \frac{A}{(f)} \to 0,
\]
yields $\Ass(A/I)=\Ass(A/L) \bigcup \Ass(A/(f))$, as $\Min(I) \supseteq \Ass(A/(f))$. An analogous formula holds for $\Ass(A/I^2)$, as $I^2=f^2L^2$. If we can show that $\Ass(A/L^2) \subseteq \Ass(L/L^2)$, then from the injection $L/L^2 \xrightarrow{\cdot f^2} I/I^2$
we have 
\begin{align*}
\Ass(A/I^2)    &=\Ass(A/L^2) \bigcup \Ass(A/(f)) \\
                        & =\Ass(L/L^2)\bigcup \Ass(A/(f)) \subseteq \Ass(I/I^2).  
\end{align*}
Hence it suffices to consider the case $\htt I=2$. Assume that there exists $\pp \in \Ass(A/I^2) \setminus \Ass(I/I^2)$. The exact sequence
\[
0\to I/I^2 \to A/I^2 \to A/I \to 0
\]
implies $\pp \in \Ass(A/I)$.

By Lemma \ref{lem_colon_containment}, $\pp \neq \mm$. Since $\Min(I) = \Min(I/I^2)$, $\pp \notin \Min(I)$, we get $\htt \pp=3$. Localizing yields $\pp A_\pp \in \Ass(A_\pp/I_\pp^2) \setminus \Ass(I_\pp/I_\pp^2)$. Then there exists $a\in (I_\pp^2:\pp A_\pp) \setminus I_\pp^2$. On the other hand, since $A_\pp$ is a regular local ring of dimension 3 containing one half, Lemma \ref{lem_colon_dim3} below implies $I_\pp^2:\pp A_\pp \subseteq I_\pp$, so $a \in I_\pp \setminus I_\pp^2$. Hence $\pp A_\pp \in \Ass(I_\pp/I_\pp^2)$. This contradiction finishes the proof.
\end{proof}
To finish the proof of Theorem \ref{thm_dim4_nequals2}, we have to show the following
\begin{lem}
\label{lem_colon_dim3}
Let $(R,\mm)$ be a three dimensional regular local ring such that $1/2 \in R$. Then for any ideal $I$ of $R$, there is a containment $I^2:\mm \subseteq I$.
\end{lem}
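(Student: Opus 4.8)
The plan is to rephrase the desired inclusion $I^2:\mm\subseteq I$ as the statement that the socle of $R/I^2$ lies in the submodule $I/I^2$; that is, using the exact sequence $0\to I/I^2\to R/I^2\to R/I\to 0$, that the canonical surjection $R/I^2\twoheadrightarrow R/I$ annihilates $\operatorname{socle}(R/I^2)=(I^2:\mm)/I^2$. After a chain of standard reductions this should bring matters to the case where $R/I$ is an Artinian Gorenstein ring, which is precisely the situation controlled by the Buchsbaum--Eisenbud structure theorem and the work of Kustin and Ulrich \cite{KU} on the square of a height-three Gorenstein ideal.

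First, the reductions. We may assume $0\neq I\subsetneq R$. Since a regular local ring is a UFD, if $\htt I\le 1$ we write $I=fJ$ with $f$ a greatest common divisor of a generating set and $\htt J\ge 2$; because $\htt\mm=3$ one checks that $(f^2):\mm=(f^2)$ and hence $I^2:\mm=f^2(J^2:\mm)$, so the claim for $I$ follows from the claim for $J$, and we may assume $\htt I\in\{2,3\}$. If $\mm\notin\Ass(R/I)$ we are done immediately: every $\pp\in\Ass(R/I)$ then has height $\le 2$ (the only prime of height $3$ is $\mm$), so $\mm R_\pp=R_\pp$ and $(I^2:\mm)_\pp=I_\pp^2\subseteq I_\pp$, whence $I^2:\mm\subseteq I$ because $\Ass(R/I)$ detects ideal membership. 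So assume $\mm\in\Ass(R/I)$ and, for a contradiction, choose $a\in(I^2:\mm)\setminus I$. Then $a\in\mm$ (otherwise $a$ is a unit and $\mm\subseteq a^{-1}I^2=I^2$, forcing $I=\mm$ and $a\in I$), the residue $a+I$ is a nonzero socle element of $R/I$, and from $aI\subseteq a\mm\subseteq I^2$ and $a^2\in a\mm\subseteq I^2$ an easy induction gives $(I+(a))^n=I^n$ for all $n\ge 2$; so the hypothetical counterexample is quite rigid.

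Next, the reduction to the Gorenstein case. Using the nonzero socle element of $R/I$ together with linkage theory — when $\htt I=2$ one first cuts by a sufficiently general element of $\mm$ to reach an $\mm$-primary ideal, and then one links $I$ successively to ideals whose quotients have strictly smaller socle dimension — one reduces to the case that $I$ is a perfect Gorenstein ideal of height three, i.e.\ $R/I$ is Artinian Gorenstein. This is the delicate point: one must check that each reduction step is compatible with the inclusion being proved, so that a counterexample at any stage would descend to one in the Gorenstein case.

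Finally, the Gorenstein case. By the Buchsbaum--Eisenbud structure theorem, $I=\operatorname{Pf}_{2t}(X)$ for an alternating $(2t+1)\times(2t+1)$ matrix $X$ with entries in $\mm$, and the Buchsbaum--Eisenbud resolution of $R/I$ carries a DG-algebra structure. Feeding this into the Kustin--Ulrich construction \cite{KU} yields an explicit finite free resolution of $R/I^2$, from which the socle $\Hom_R(R/\mm,R/I^2)$ can be computed and shown to be contained in $I/I^2$. The hypothesis $1/2\in R$ is used precisely at this point; in characteristic two the computation, and indeed the conclusion, breaks down, as Example \ref{ex_non-Torvanishing} witnesses. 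I expect the real obstacle to be the middle step — organising the linkage-theoretic descent of an arbitrary ideal to a height-three Gorenstein ideal while keeping the inclusion $I^2:\mm\subseteq I$ under control — together with extracting the socle of $R/I^2$ cleanly from the Kustin--Ulrich resolution in the Gorenstein case.
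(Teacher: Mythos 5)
Your final step (the $\mm$-primary Gorenstein case, handled via the Buchsbaum--Eisenbud structure theorem and the Kustin--Ulrich resolution of $I^2$) is essentially what the paper does: the paper packages it as the vanishing of the maps $\Tor^R_i(I^2,k)\to\Tor^R_i(I,k)$, which at $i=\dim R-1$ gives $I^2:\mm\subseteq I$, while you propose to read off the socle of $R/I^2$ directly from the Kustin--Ulrich complex; these are the same computation dressed differently.

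The problem is the reduction you flag yourself as ``the delicate point.'' You propose to pass from an arbitrary $I$ to a height-three Gorenstein ideal by cutting down to an $\mm$-primary ideal and then \emph{linking} successively to decrease the socle dimension. But linkage does not obviously interact with the inclusion $I^2:\mm\subseteq I$: if $I'=(f_1,f_2,f_3):I$ is a link, there is no apparent relation between $I^2:\mm$ and $(I')^2:\mm$, because squaring is not stable under linkage. You never supply the mechanism by which a counterexample for $I$ would produce one for $I'$, and I do not see how to do so; this is a genuine gap, not merely a detail to be filled in. The paper sidesteps linkage entirely with a much shorter reduction: write $I=I_1\cap\cdots\cap I_d$ as a finite intersection of \emph{irreducible} ideals. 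Since $I^2\subseteq I_j^2$ one has $I^2:\mm\subseteq\bigcap_j(I_j^2:\mm)$, so it suffices to treat each irreducible $I_j$. An irreducible ideal in a Noetherian ring is primary; if it is not $\mm$-primary then $I_j^2:\mm\subseteq I_j:\mm=I_j$ trivially; and an $\mm$-primary irreducible ideal automatically satisfies $\dim_k(I_j:\mm)/I_j=1$, so $R/I_j$ is Artinian Gorenstein and $I_j$ is perfect Gorenstein of height three. That one observation makes your earlier case analysis (the $\htt I\le 1$ reduction, the $\mm\notin\Ass(R/I)$ case, the rigidity observations about the putative socle element $a$) unnecessary, and it closes the gap your linkage sketch leaves open. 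Replace the linkage descent with the irreducible primary decomposition and the rest of your argument goes through.
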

We will deduce it from the following result.
\begin{prop}
\label{prop_Gorenstein_height3_Torvanishing}
Let $(R,\mm)$ be a regular local ring such that $1/2\in R$. Let $J$ be a perfect Gorenstein ideal of height $3$. Then for all $i\ge 0$, the maps
\[
\Tor^R_i(J^2,k) \to \Tor^R_i(J,k)
\]
is zero. In particular, there is a containment $J^2:\mm \subseteq J$.
\end{prop}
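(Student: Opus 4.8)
The plan is to pass to minimal free resolutions. From the short exact sequences $0\to J^2\to R\to R/J^2\to 0$ and $0\to J\to R\to R/J\to 0$ and the freeness of $R$, the long exact sequences of $\Tor$ give natural isomorphisms $\Tor^R_i(J^2,k)\cong\Tor^R_{i+1}(R/J^2,k)$ and $\Tor^R_i(J,k)\cong\Tor^R_{i+1}(R/J,k)$, compatible with the maps induced by $J^2\hookrightarrow J$ and by the canonical surjection $\pi\colon R/J^2\twoheadrightarrow R/J$. Thus the assertion is equivalent to the following: a comparison map $\phi_\bullet\colon\mathbb{G}\to\mathbb{F}$ of minimal $R$-free resolutions of $R/J^2$ and $R/J$ lifting $\pi$ (with $\phi_0=\operatorname{id}_R$) satisfies $\phi_i(\mathbb{G}_i)\subseteq\mm\mathbb{F}_i$ for all $i\ge 1$; equivalently, $\phi_\bullet\otimes_R k$ vanishes in every positive homological degree. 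Since $J$ is perfect of height $3$ we have $\operatorname{pd}_R(R/J)=3$, so $\mathbb{F}_i=0$ for $i\ge 4$ and only the degrees $i=1,2,3$ require attention.

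For these I would invoke structure theory. By the Buchsbaum--Eisenbud theorem there is an alternating $(2t+1)\times(2t+1)$ matrix $X=(x_{ij})$ with entries in $\mm$ such that $J$ is generated by the sub-maximal Pfaffians $p_1,\dots,p_{2t+1}$ of $X$, and $\mathbb{F}$ is the self-dual complex
\[
\mathbb{F}\colon\quad 0\longrightarrow R\xrightarrow{d_1^{\mathrm{T}}}R^{2t+1}\xrightarrow{X}R^{2t+1}\xrightarrow{d_1}R,\qquad d_1=(p_1,\dots,p_{2t+1}).
\]
The decisive input is the work of Kustin and Ulrich \cite{KU}: from the matrix $X$ they construct an explicit finite $R$-free resolution $\mathbb{G}$ of $R/J^2$ --- minimal because $1/2\in R$ --- with $\mathbb{G}_0=R$, with $\mathbb{G}_1=S_2(\mathbb{F}_1)$ mapping onto $J^2$ by $e_ie_j\mapsto p_ip_j$, and with all further terms and differentials given by explicit functorial formulas in the $x_{ij}$ and the $p_k$.

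Now I would build $\phi$ step by step. Put $\phi_0=\operatorname{id}_R$, and let $\phi_1$ send the basis element of $\mathbb{G}_1$ corresponding to $p_ip_j$ to $\tfrac{1}{2}(p_ie_j+p_je_i)\in\mathbb{F}_1$ (the factor $\tfrac{1}{2}$ keeps the choice compatible with the symmetric structure but is otherwise inessential); since $d_1\bigl(\tfrac{1}{2}(p_ie_j+p_je_i)\bigr)=p_ip_j$ this is a valid lift, and its entries lie in $\{p_1,\dots,p_{2t+1}\}\subseteq J\subseteq\mm$, so $\phi_1\otimes k=0$. The maps $\phi_2$ and $\phi_3$ are then determined, up to an ambiguity lying in $\mm\mathbb{F}_i$, by the lifting identities $\phi_{i-1}d_i^{\mathbb{G}}=d_i^{\mathbb{F}}\phi_i$ together with the explicit differentials of $\mathbb{G}$; a direct inspection of the formulas in \cite{KU} shows that one may choose $\phi_2$ and $\phi_3$ whose entries are polynomials without constant term in the $x_{ij}$ and the $p_k$, hence lie in $\mm$. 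This is the heart of the matter and the only genuinely nonformal point: because $X$ and the differentials of $\mathbb{G}$ already have entries in $\mm$, the lifting identities alone do not force $\phi_2,\phi_3$ into $\mm\mathbb{F}_i$, so one really must use the precise shape of the Kustin--Ulrich resolution of $R/J^2$ --- and it is here that $1/2\in R$ is essential, both for the validity and minimality of that resolution and for the symmetrization defining $\phi_1$; the verification is lengthy but mechanical. With $\phi_1\otimes k=\phi_2\otimes k=\phi_3\otimes k=0$ and $\mathbb{F}_i=0$ for $i\ge 4$, the asserted $\Tor$-vanishing follows.

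It remains to deduce $J^2:\mm\subseteq J$. Apply $\Hom_R(k,-)$ to $0\to J/J^2\to R/J^2\xrightarrow{\pi}R/J\to 0$; by left-exactness the kernel of $\Hom_R(k,\pi)$ is $\Hom_R(k,J/J^2)$. Now $\Hom_R(k,R/J^2)=(J^2:\mm)/J^2$ and $\Hom_R(k,J/J^2)=\bigl((J^2:\mm)\cap J\bigr)/J^2$; moreover, setting $d:=\dim R\ge 3$, the self-dual Koszul resolution of $k$ over the regular local ring $R$ yields a natural isomorphism $\Hom_R(k,M)\cong\Tor^R_d(k,M)$ under which $\Hom_R(k,\pi)$ is identified with $\Tor^R_d(k,R/J^2)\to\Tor^R_d(k,R/J)$, a map that vanishes by the first part since $d\ge 1$. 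Hence $\Hom_R(k,\pi)=0$, so $(J^2:\mm)/J^2=\bigl((J^2:\mm)\cap J\bigr)/J^2$, i.e. $J^2:\mm\subseteq J$, as required.
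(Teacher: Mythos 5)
Your overall strategy mirrors the paper's exactly: reduce the containment to Tor-vanishing via Koszul duality, take the Buchsbaum--Eisenbud resolution of $R/J$ and the Kustin--Ulrich resolution of $R/J^2$, and build a comparison map whose positive-degree components land in the maximal ideal. (The only cosmetic difference is that you resolve the quotients $R/J^2$, $R/J$ and use the isomorphisms $\Tor^R_i(J,k)\cong\Tor^R_{i+1}(R/J,k)$, whereas the paper resolves the ideals $J^2$, $J$ directly; the indexing is shifted by one but the content is identical.) The deduction of $J^2:\mm\subseteq J$ at the end is also the same as the paper's, just written out more carefully.

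However, there is a genuine gap, and you point at it yourself: you assert that ``a direct inspection of the formulas in \cite{KU} shows that one may choose $\phi_2$ and $\phi_3$ whose entries lie in $\mm$,'' and then declare the verification ``lengthy but mechanical.'' That assertion \emph{is} the proposition. Existence of a lift $\phi_\bullet$ is automatic; existence of a lift with $\phi_i(\mathbb{G}_i)\subseteq\mm\mathbb{F}_i$ for $i\ge 1$ is not --- different choices of lift differ by homotopies, and there is no formal reason any choice should land in $\mm$ until one is exhibited. The paper's proof consists precisely of writing down the maps explicitly (its $\alpha_0,\alpha_1,\alpha_2$, the analogues of your $\phi_1,\phi_2,\phi_3$), checking that they satisfy the lifting identities against the differentials of the Kustin--Ulrich complex $0\to\wedge^2 F^*\to(F\otimes F^*)/\eta\to S_2(F)\to J^2\to 0$, and observing that their matrix entries are $R$-linear in the $a_{ij}$ and $f_k$ and hence lie in $\mm$. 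In particular, your $\phi_2$ (the paper's $\alpha_1$) must be checked to be well defined on the quotient $(F\otimes F^*)/\eta$ --- the paper does this by verifying $\alpha_1(\eta)=0$ --- and the identity $\phi_1 d^{\mathbb{G}}_2=d^{\mathbb{F}}_2\phi_2$ must be verified case by case according to whether the index $(i,j)$ equals $(g,g)$. None of this is in your write-up, so as it stands the proposal states the plan of the proof but not the proof.

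One minor inaccuracy: you call the factor $\tfrac12$ in $\phi_1$ ``otherwise inessential,'' but it is in fact what makes the chosen $\phi_2$ and $\phi_3$ satisfy the lifting identities (all three of the paper's maps carry a $\tfrac12$), so it is better to say it is forced by the later choices rather than merely aesthetic.
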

\begin{proof}
We note that the second assertion follows from the first. Indeed, the hypotheses implies that $\dim(R)=d\ge 3$. Using the Koszul complex of $R$, we see that
\[
\Tor^R_{d-1}(J,k) \cong \Tor^R_d(R/J,k) \cong  \frac{J:\mm}{J}.
\]
Since the map $\Tor^R_i(J^2,k) \to \Tor^R_i(J,k)$ is zero for $i=d-1$, the conclusion is $J^2:\mm \subseteq J$. Hence it remains to prove the first assertion. We do this by exploiting the structure of the minimal free resolution of $J$ and $J^2$, and constructing a map between these complexes.

Since $J$ is Gorenstein of height three, it has a minimal free resolution
\[
P: 0\to R \xrightarrow{\delta} F^* \xrightarrow{\rho} F \to 0.
\]
Here $F=Re_1 \oplus \cdots \oplus Re_g$ is a free $R$-module of rank $g$ -- an odd integer. The map $\tau: F\to J$ maps $e_i$ to $f_i$, where $J=(f_1,\ldots,f_g)$. The free $R$-module $F^*$ has basis $e_1^*,\ldots,e_g^*$. The map $\rho: F^* \to F$ is alternating with matrix $(a_{i,j})_{g\times g}$, namely $a_{i,i}=0$ for $1\le i\le g$ and $a_{i,j}=-a_{j,i}$ for $1\le i<j\le g$, and 
\[
\rho(e_i^*)=\sum_{j=1}^g a_{j,i}e_j \quad \text{for all $i$}.
\]
The map $\delta: R\to F^*$ has the matrix $(f_1 \ldots f_g)^T$, i.e. it is given by $\delta(1)=f_1e_1^*+\cdots+f_g e_g^*$.

It is known that if $J$ is Gorenstein of height three, then $J\otimes_R J \cong J^2$, and by constructions due to Kustin and Ulrich \cite[Definition 5.9, Theorems 6.2 and 6.17]{KU}, $J^2$ has a minimal free resolution $Q$ as below. Note that in the terminology of \cite{KU} and thanks to the discussion after Theorem 6.22 in that work, $J$ satisfies $\text{SPC}_{g-2}$, hence Theorem 6.17, parts (c)(i) and (d)(ii) in \emph{ibid.} are applicable. The resolution $Q$ given in the following is taken from (2.7) and Definition 2.15 in Kustin and Ulrich's paper.
\[
Q:   0 \to  \wedge^2 F^*  \xrightarrow{d_2}  (F\otimes F^*)/\eta  \xrightarrow{d_1}    S_2(F)  \xrightarrow{d_0}  J^2 \to   0.
\]
Here $S_2(F)=\bigoplus_{1\le i\le j\le g} R(e_i\otimes e_j)$ is the second symmetric power of $F$, $\eta=R(e_1\otimes e_1^*+\cdots+e_g\otimes e_g^*) \subseteq F\otimes F^*$, and $\wedge^2 F^*$ is the second exterior power of $F^*$.

The maps $d_0: S_2(F)\to J^2$, $d_1: (F\otimes F^*)/\eta \to S_2(F)$, $d_2: \wedge^2 F^*  \to (F\otimes F^*)/\eta$ are given by:
\begin{align*}
d_0(e_i\otimes e_j) &=f_if_j \quad \text{for $1\le i,j\le g$},\\
d_1(e_i\otimes e_j^*+\eta) &=\sum_{l=1}^g a_{l,j} (e_i\otimes e_l) \quad \text{for $1\le i,j\le g$},\\
d_2(e_i^*\wedge e_j^*) &= \sum_{l=1}^g a_{l,i}(e_l\otimes e_j^*) - \sum_{v=1}^g a_{v,j}(e_v\otimes e_i^*)+\eta \quad \text{for $1\le i<j\le g$}.
\end{align*}

We construct a lifting $\alpha: Q\to P$ of the natural inclusion map $J^2\to J$ such that $\alpha(Q)\subseteq \mm P$. 
\[
\xymatrix{
Q:  & 0 \ar[r] &  \wedge^2 F^* \ar[d]^{\alpha_2} \ar[r]^{d_2} & (F\otimes F^*)/\eta \ar[r]^{d_1} \ar[d]^{\alpha_1}  & S_2(F)  \ar[r]^{d_0} \ar[d]^{\alpha_0} & J^2 \ar[r] \ar@{^{(}->}[d]_{\iota}   & 0 \\
P: &  0 \ar[r] & R  \ar[r]^{\delta} & F^* \ar[r]^{\rho} & F \ar[r]^{\tau} & J \ar[r] & 0.
}
\]
In detail, this lifting is

\begin{itemize}
 \item $\alpha_0(e_i\otimes e_j) = \dfrac{f_ie_j+f_je_i}{2} \quad \text{for $1\le i,j\le g$},$
\item $\alpha_1(e_i\otimes e_j^*+\eta)  = \begin{cases}
\dfrac{f_ie_j^*}{2}, & \text{if $(i,j) \neq (g,g)$},\\
\dfrac{-\sum_{v=1}^{g-1}f_v e_v^*}{2}, &\text{if $(i,j)=(g,g)$},
\end{cases}$
\item $\alpha_2(e_i^*\wedge e_j^*) = \begin{cases}
                                0, &\text{if $1\le i<j\le g-1$},\\
                                \dfrac{-a_{g,i}}{2}, &\text{if $1\le i\le g-1, j=g$}.
                \end{cases}$
\end{itemize}
Note that $\alpha_1$ is well-defined since 
$$
\alpha_1(e_1\otimes e_1^*+\cdots+e_g\otimes e_g^*+\eta) = \dfrac{\sum_{v=1}^{g-1}f_v e_v^*}{2}\dfrac{-\sum_{v=1}^{g-1}f_v e_v^*}{2}=0.
$$

Observe that $\alpha(Q)\subseteq \mm P$ since $f_i,a_{i,j} \in \mm$ for all $i,j$. It remains to check that the map $\alpha: Q\to P$ is a lifting for $J^2\hookrightarrow J$. For this, we have:
\begin{itemize}
 \item $\tau(\alpha_0(e_i\otimes e_j))=\tau\left(\dfrac{f_ie_j+f_je_i}{2}\right)=f_if_j=\iota(d_0(e_i\otimes e_j))$.
\end{itemize}
Next we compute
\begin{align*}
\alpha_0(d_1(e_i\otimes e_j^*+\eta)) &=\alpha_0\left(\sum_{l=1}^g a_{l,j} (e_i\otimes e_l)\right)= \sum_{l=1}^g a_{l,j}\dfrac{f_ie_l+f_le_i}{2} \\
                                     &= \dfrac{f_i\left(\sum_{l=1}^ga_{l,j}e_l\right)}{2} \quad \text{(since $\sum_{l=1}^g a_{l,j}f_l=0$)}.
\end{align*}
\begin{itemize}
\item If $(i,j)\neq (g,g)$ then
\begin{align*}
\rho(\alpha_1(e_i\otimes e_j^*+\eta)) &= \rho(f_ie_j^*/2)= \dfrac{f_i\left(\sum_{l=1}^ga_{l,j}e_l\right)}{2}.
\end{align*}
\item  If $(i,j)=(g,g)$ then 
\begin{align*}
\rho(\alpha_1(e_g\otimes e_g^*+\eta)) &= \rho\left(\dfrac{-\sum_{v=1}^{g-1}f_ve_v^*}{2}\right)= \dfrac{-\sum_{v=1}^{g-1}f_v(\sum_{l=1}^g a_{l,v}e_l)}{2}\\
                                    &= \dfrac{\sum\limits_{l=1}^g(\sum\limits_{v=1}^{g-1} a_{v,l}f_v)e_l}{2} \quad (\text{since $a_{v,l}=-a_{l,v}$})\\
                                    &= \dfrac{-\sum\limits_{l=1}^g(a_{g,l}f_g)e_l}{2} \quad \text{(since $\sum_{v=1}^g a_{v,l}f_v=0$)}\\
                                    &= \dfrac{f_g\left(\sum_{l=1}^ga_{l,g}e_l\right)}{2} \quad (\text{since $a_{g,l}=-a_{l,g}$})
\end{align*}
\item Hence in both cases, $\alpha_0(d_1(e_i\otimes e_j^*+\eta))=\rho(\alpha_1(e_i\otimes e_j^*+\eta))$.
\end{itemize}
Next, for $1\le i< j \le g-1$, we compute
\begin{align*}
\alpha_1(d_2(e_i^*\wedge e_j^*)) &= \alpha_1\left(\sum_{l=1}^g a_{l,i}(e_l\otimes e_j^*) - \sum_{v=1}^g a_{v,j}(e_v\otimes e_i^*)+\eta\right)\\
                                &= \dfrac{\left(\sum_{l=1}^g a_{l,i}f_l\right)e_j^*}{2}-\dfrac{\left(\sum_{v=1}^g a_{v,j}f_v\right)e_i^*}{2}\\
                                & \qquad \text{(since neither $(l,j)$ nor $(v,i)$ is $(g,g)$)}\\
                                &=0 \quad \text{(since $\sum_{v=1}^g a_{v,l}f_v=0$)}\\
                                & =\delta(\alpha_2(e_i^*\wedge e_j^*)).
\end{align*}

Finally, for $1\le i\le g-1, j=g$, we have
\begin{align*}
\alpha_1(d_2(e_i^*\wedge e_g^*)) &= \alpha_1\left(\sum_{l=1}^g a_{l,i}(e_l\otimes e_g^*) - \sum_{v=1}^g a_{v,g}(e_v\otimes e_i^*)+\eta\right)\\
                                &= \dfrac{\left(\sum_{l=1}^{g-1} a_{l,i}f_l\right)e_g^*}{2}-\dfrac{\sum_{v=1}^{g-1}a_{g,i}f_v e_v^*}{2} -\dfrac{\left(\sum_{v=1}^g a_{v,g}f_v\right)e_i^*}{2}\\
                                & \text{\footnotesize{(the formula for $\alpha_1(e_l\otimes e_g^*)$ depends on whether $l=g$ or not)}} \\
                                &= \dfrac{-a_{g,i}f_ge_g^*}{2}-\dfrac{\sum_{v=1}^{g-1}a_{g,i}f_v e_v^*}{2} \quad \text{(since $\sum_{v=1}^g a_{v,l}f_v=0$)} \\
                                &= \dfrac{-a_{g,i}\left(\sum_{v=1}^gf_v e_v^*\right)}{2}
\end{align*}
We also have
\[
\delta(\alpha_2(e_i^*\wedge e_g^*))=\delta(-a_{g,i}/2)=\dfrac{-a_{g,i}\left(\sum_{v=1}^gf_v e_v^*\right)}{2}.
\]
Hence $\alpha: Q\to P$ is a lifting of the inclusion map $J^2\to J$. 

Since $\alpha(Q) \subseteq \mm P$, it follows that $\alpha\otimes (R/\mm)=0$. Hence $\Tor^R_i(J^2,k) \to \Tor^R_i(J,k)$ is the zero map for all $i$. The proof is concluded.
\end{proof}
\begin{proof}[Proof of Lemma \ref{lem_colon_dim3}]
It is harmless to assume that $I\subseteq \mm$. We can write $I$ as a finite intersection $I_1\cap \cdots \cap I_d$ of irreducible ideals. If we can show the lemma for each of the components $I_j$, then
\[
I^2:\mm \subseteq (I_1^2:\mm) \cap \cdots \cap (I_d^2:\mm) \subseteq \bigcap_{j=1}^d I_j=I.
\]
Hence we can assume that $I$ is an irreducible ideal. Being irreducible, $I$ is a primary ideal. If $\sqrt{I} \neq \mm$, then $I^2:\mm\subseteq I:\mm=I$. Therefore we assume that $I$ is an $\mm$-primary irreducible ideal. Let $k=R/\mm$. It is a folklore and simple result that any $\mm$-primary irreducible ideal must satisfy $\dim_k (I:\mm)/I=1$. Note that $R$ is a regular local ring, so being a Cohen-Macaulay module of dimension zero, $R/I$ is perfect.  Hence $I$ is a perfect Gorenstein ideal of height three. It then remains to use the second assertion of Proposition \ref{prop_Gorenstein_height3_Torvanishing}.
\end{proof}

In view of Lemma \ref{lem_colon_dim3}, it seems natural to ask the following
\begin{quest}
\label{quest_colon_dim3}
Let $(R,\mm)$ be a three dimensional regular local ring containing $1/2$. Let $I$ be an ideal of $R$. Is it true that for all $n\ge 2$, $I^n:\mm \subseteq I^{n-1}$?
\end{quest}
For regular local rings of dimension at most two, Ahangari Maleki has proved that Question \ref{quest_colon_dim3} has a positive answer regardless of the characteristic \cite[Proof of Theorem 3.7]{AM}. Nevertheless, if $\dim A$ is not fixed, Question \ref{quest_colon_dim3} has a negative answer in positive characteristic in general. Here is a counterexample in dimension 9(!).
\begin{ex}
\label{ex_non-Torvanishing}
Choose $\chara k=2$, $A=k[x_1,x_2,x_3,\ldots,z_1,z_2,z_3]$ and
\[
M=\begin{pmatrix}
   x_1 & x_2 & x_3 \\
   y_1 & y_2 & y_3 \\
   z_1 & z_2 & z_3
  \end{pmatrix}.
\]
Let $I_2(M)$ be the ideal generated by the 2-minors of $M$, and
$$
I=I_2(M)+\sum_{i=1}^3(x_i,y_i,z_i)^2+(x_1,x_2,x_3)^2+(y_1,y_2,y_3)^2+(z_1,z_2,z_3)^2.
$$
Denote $\mm=A_+$. The Betti table of $A/I$, computed by Macaulay2 \cite{GS}, is

\begin{verbatim}
             0  1   2   3   4   5   6   7  8 9
      total: 1 36 160 315 404 404 315 160 36 1
          0: 1  .   .   .   .   .   .   .  . .
          1: . 36 160 315 288 116   .   .  . .
          2: .  .   .   . 116 288 315 160 36 .
          3: .  .   .   .   .   .   .   .  . 1
\end{verbatim}
Therefore $I$ is an $\mm$-primary, binomial, quadratic, Gorenstein ideal. Also, the relation $x_1y_2z_3+x_2y_3z_1+x_3y_1z_2\in (I^2:\mm)\setminus I$ implies $I^2:\mm \not\subseteq I$. This means that the map $\Tor^A_8(k,I^2)\to \Tor^A_8(k,I)$ is not zero. In particular, this gives a negative answer to \cite[Question 3.6]{AM} in positive characteristic.
\end{ex}

\section{Powers of sums and associated primes}
\label{sect_asspr_powers}
\subsection{Bounds for associated primes}

The second main result of this paper is the following. Its part (3) generalizes \cite[Lemma 3.4]{HM}, which deals only with squarefree monomial ideals.
\begin{thm}
\label{thm_Asscontainments}
Let $A, B$ be commutative Noetherian algebras over $k$ such that $R=A\otimes_k B$ is Noetherian. Let $I,J$ be proper ideals of $A,B$, respectively. 
\begin{enumerate}[\quad \rm(1)]
\item For all $n\ge 1$, we have inclusions
\begin{align*}
\bigcup_{i=1}^n \mathop{\bigcup_{\pp \in \Ass_A(I^{i-1}/I^i)}}_{\qq \in \Ass_B(J^{n-i}/J^{n-i+1})} \Min_R(R/\pp+\qq) &  \subseteq   \Ass_R \frac{R}{(I+J)^n},\\
\Ass_R \frac{R}{(I+J)^n} &  \subseteq   \bigcup_{i=1}^n \mathop{\bigcup_{\pp \in \Ass_A(A/I^i)}}_{\qq \in \Ass_B(J^{n-i}/J^{n-i+1})} \Min_R(R/\pp+\qq).
\end{align*}
\item If moreover $\Ass(A/I^n)=\Ass(I^{n-1}/I^n)$ for all $n\ge 1$, then both inclusions in \textup{(1)} are equalities. 

\item In particular, if $A$ and $B$ are polynomial rings and $I$ and $J$ are monomial ideals, then for all $n\ge 1$, we have an equality
\[
\Ass_R \frac{R}{(I+J)^n} = \bigcup_{i=1}^n \mathop{\bigcup_{\pp \in \Ass_A(A/I^i)}}_{\qq \in \Ass_B(B/J^{n-i+1})} \{\pp+\qq\}.
\]
\end{enumerate}
\end{thm}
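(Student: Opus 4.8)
The whole argument rests on two ingredients from Section~\ref{sect_prelim}: the splitting of the graded pieces of the $(I+J)$-adic filtration (Proposition~\ref{prop_decomposition}) and the description of the associated primes of an external tensor product (Theorem~\ref{thm_asso}). Applying Proposition~\ref{prop_decomposition} to the power filtrations $I_j=I^j$, $J_j=J^j$, so that $W_j=(I+J)^j$, one gets for every $j\ge 0$ an isomorphism $(I+J)^j/(I+J)^{j+1}\cong\bigoplus_{i=0}^j I^i/I^{i+1}\otimes_k J^{j-i}/J^{j-i+1}$. For the lower bound in~(1): the bottom term $(I+J)^{n-1}/(I+J)^n$ is a submodule of $R/(I+J)^n$, so $\Ass_R\big((I+J)^{n-1}/(I+J)^n\big)\subseteq\Ass_R(R/(I+J)^n)$; shifting the index in the decomposition of that bottom term (so that the $i$-th summand is $I^{i-1}/I^i\otimes_k J^{n-i}/J^{n-i+1}$) and computing $\Ass_R$ of each summand with Theorem~\ref{thm_asso} yields exactly the first displayed union.

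For the upper bound one runs the same filtration from the top: $\Ass_R(R/(I+J)^n)\subseteq\bigcup_{j=0}^{n-1}\Ass_R\big((I+J)^j/(I+J)^{j+1}\big)$, and each $\Ass_R\big((I+J)^j/(I+J)^{j+1}\big)$ unwinds, via the decomposition and Theorem~\ref{thm_asso}, into sets $\Min_R(R/\pp+\qq)$ with $\pp\in\Ass_A(I^i/I^{i+1})\subseteq\Ass_A(A/I^{i+1})$ and $\qq\in\Ass_B(J^{j-i}/J^{j-i+1})$. This preliminary containment is too coarse --- it admits pairs $(\pp,\qq)$ whose $I$- and $J$-exponents sum to \emph{less} than $n$, whereas the assertion only allows the complementary value, sum $=n$ --- so the real content of the theorem is the elimination of these off-diagonal contributions. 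The plan is to do this by induction on $n$. For $\mathfrak{P}\in\Ass_R(R/(I+J)^n)$, the exact sequence $0\to(I+J)^{n-1}/(I+J)^n\to R/(I+J)^n\to R/(I+J)^{n-1}\to0$ gives $\mathfrak{P}\in\Ass_R\big((I+J)^{n-1}/(I+J)^n\big)\cup\Ass_R(R/(I+J)^{n-1})$; in the first case one is already on the diagonal (decompose the bottom piece and apply Theorem~\ref{thm_asso}), while in the second case the inductive hypothesis puts $\mathfrak{P}$ in the diagonal union at level $n-1$. It then remains to transport such a prime from level $n-1$ to level $n$: for this one fixes a generator $\bar x$ of a copy of $R/\mathfrak{P}$ inside $R/(I+J)^n$ (so $(I+J)^n:x=\mathfrak{P}$), uses $\mathfrak{P}\supseteq\sqrt{(I+J)^n}\supseteq I+J$ to get $(I+J)x\subseteq(I+J)^n$, and argues by localizing at $\mathfrak{P}$ --- with the base case $n=1$, namely Theorem~\ref{thm_asso} for $R/(I+J)\cong(A/I)\otimes_k(B/J)$ --- that the only primes that survive are those with complementary exponents, i.e.\ of the shape $\Min_R(R/\pp+\qq)$ with $\pp\in\Ass_A(A/I^i)$ and $\qq\in\Ass_B(J^{n-i}/J^{n-i+1})$. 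I expect this passage --- promoting the exponent sum from $n-1$ to $n$, equivalently discarding the off-diagonal primes --- to be the only step that is not a formal consequence of Proposition~\ref{prop_decomposition} and Theorem~\ref{thm_asso}, and hence the main obstacle.

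Part~(2) is then purely formal: under the hypothesis $\Ass(A/I^n)=\Ass(I^{n-1}/I^n)$ for all $n\ge1$ one has $\Ass_A(A/I^i)=\Ass_A(I^{i-1}/I^i)$ for every $i$, so the two unions in~(1) coincide and both inclusions become equalities. For part~(3), let $A,B$ be polynomial rings and $I,J$ monomial ideals; by Theorem~\ref{thm_specialcase_ass}(1) the ideal $I$ satisfies the hypothesis of~(2), hence $\Ass_R(R/(I+J)^n)=\bigcup_{i=1}^n\bigcup_{\pp\in\Ass_A(A/I^i),\,\qq\in\Ass_B(J^{n-i}/J^{n-i+1})}\Min_R(R/\pp+\qq)$; applying Theorem~\ref{thm_specialcase_ass}(1) to $J$ replaces $\Ass_B(J^{n-i}/J^{n-i+1})$ by $\Ass_B(B/J^{n-i+1})$, and since such $\pp$ and $\qq$ are monomial primes in disjoint sets of variables, $\pp+\qq$ is again a monomial prime, so $\Min_R(R/\pp+\qq)=\{\pp+\qq\}$. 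This gives the displayed formula, and in the squarefree case recovers \cite[Lemma~3.4]{HM}.
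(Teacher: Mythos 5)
Your lower bound is correct and matches the paper: $(I+J)^{n-1}/(I+J)^n$ decomposes by Proposition~\ref{prop_decomposition} into $\bigoplus_{i=1}^n I^{i-1}/I^i\otimes_k J^{n-i}/J^{n-i+1}$, and being a submodule of $R/(I+J)^n$, its associated primes (computed by Theorem~\ref{thm_asso}) inject into $\Ass_R(R/(I+J)^n)$. Parts (2) and (3) are also correctly deduced once (1) is in place.

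The upper bound, however, has a genuine gap, and you have in fact put your finger on exactly where it is. Filtering $R/Q^n$ (with $Q=I+J$) by the $Q$-adic filtration gives, as you say, $\Ass_R(R/Q^n)\subseteq\bigcup_{j=0}^{n-1}\Ass_R(Q^j/Q^{j+1})$, and the $j<n-1$ pieces contribute off-diagonal pairs $(\pp,\qq)$ with $\pp\in\Ass_A(I^i/I^{i+1})$ and $\qq\in\Ass_B(J^{j-i}/J^{j-i+1})$, whose exponents sum to $j+1<n$. The theorem asserts these off-diagonal contributions cannot actually occur as associated primes of $R/Q^n$, but your proposed ``transport'' does not establish this. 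Concretely: if $\mathfrak{P}\in\Ass_R(R/Q^n)$ also lies in $\Ass_R(R/Q^{n-1})$, induction places it in the union $\bigcup_{i}\Min_R(R/\pp+\qq)$ with $\pp\in\Ass_A(A/I^i)$, $\qq\in\Ass_B(J^{n-1-i}/J^{n-i})$; to move to level $n$ you would need $\pp\in\Ass_A(A/I^{i+1})$ or $\qq\in\Ass_B(J^{n-i}/J^{n-i+1})$, and neither inclusion holds in general --- this is precisely the failure of persistence. Knowing additionally that $(I+J)x\subseteq Q^n$ for a witness $x$ of $\mathfrak{P}$ and localizing at $\mathfrak{P}$ does not by itself force the complementary exponents; no argument is given, and I do not see one along these lines.

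The paper avoids the $Q$-adic filtration entirely and instead filters $R/Q^n=J^0/(J^0Q^n)$ by the chain of modules $J^{n-i}/(J^{n-i}Q^i)$, $i=n,n-1,\dots,1$. The crucial step is the isomorphism
\[
\frac{J^{n-i}I^i+J^{n-i+1}}{J^{n-i}Q^i}\;\cong\;\frac{J^{n-i+1}}{J^{n-i+1}Q^{i-1}},
\]
which is proved using the identity $J^{n-i}I^i\cap J^{n-i+1}=J^{n-i+1}I^i=J^{n-i}I^i\cap J^{n-i+1}Q^{i-1}$, itself a consequence of $I\cap J=IJ$ from Proposition~\ref{prop_decomposition}. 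This yields the short exact sequence
\[
0\longrightarrow \frac{J^{n-i+1}}{J^{n-i+1}Q^{i-1}} \longrightarrow \frac{J^{n-i}}{J^{n-i}Q^i} \longrightarrow \frac{A}{I^{i}}\otimes_k \frac{J^{n-i}}{J^{n-i+1}} \longrightarrow 0,
\]
whose cokernel is \emph{exactly} the diagonal term $A/I^i\otimes_k J^{n-i}/J^{n-i+1}$, with no off-diagonal junk. Iterating over $i=n,\dots,1$ gives $\Ass_R(R/Q^n)\subseteq\bigcup_{i=1}^n\Ass_R\bigl(A/I^i\otimes_k J^{n-i}/J^{n-i+1}\bigr)$ directly, and Theorem~\ref{thm_asso} finishes. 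So contrary to your expectation, the upper bound \emph{is} a formal consequence of Proposition~\ref{prop_decomposition} and Theorem~\ref{thm_asso} --- but one must choose the right filtration. To repair your proof you should replace the $Q$-adic filtration by this one.
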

\begin{proof}
(1) Denote $Q=I+J$. By Proposition \ref{prop_decomposition}, we have
\[
Q^{n-1}/Q^n = \bigoplus_{i=1}^n (I^{i-1}/I^i \otimes_k J^{n-i}/J^{n-i+1}).
\]
Hence
\begin{equation}
\label{eq_inclusion_Ass1}
\bigcup_{i=1}^n \Ass_R (I^{i-1}/I^i \otimes_k J^{n-i}/J^{n-i+1}) = \Ass_R (Q^{n-1}/Q^n) \subseteq \Ass_R(R/Q^n).
\end{equation}
For each $1\le i\le n$, we have $J^{n-i}Q^i\subseteq J^{n-i}(I^i+J)=J^{n-i}I^i+J^{n-i+1}$. We claim that $(J^{n-i}I^i+J^{n-i+1})/J^{n-i}Q^i \cong J^{n-i+1}/J^{n-i+1}Q^{i-1}$, so that there is an exact sequence
\begin{equation}
\label{eq_exactseq}
0\longrightarrow \frac{J^{n-i+1}}{J^{n-i+1}Q^{i-1}} \longrightarrow \frac{J^{n-i}}{J^{n-i}Q^i} \longrightarrow \frac{J^{n-i}}{J^{n-i+1}+J^{n-i}I^i} \cong \frac{A}{I^{i}}\otimes_k \frac{J^{n-i}}{J^{n-i+1}} \longrightarrow 0.
\end{equation}
For the claim, we have
\begin{align*}
(J^{n-i}I^i+J^{n-i+1})/J^{n-i}Q^i &= \frac{J^{n-i}I^i+J^{n-i+1}}{J^{n-i}(I^i+JQ^{i-1})}= \frac{J^{n-i}I^i+J^{n-i+1}}{J^{n-i}I^i+J^{n-i+1}Q^{i-1}}\\
&= \frac{(J^{n-i}I^i+J^{n-i+1})/J^{n-i}I^i}{(J^{n-i}I^i+J^{n-i+1}Q^{i-1})/J^{n-i}I^i}\\
&\cong \frac{J^{n-i+1}/J^{n-i+1}I^i}{J^{n-i+1}Q^{i-1}/J^{n-i+1}I^i} \cong \frac{J^{n-i+1}}{J^{n-i+1}Q^{i-1}}.
\end{align*}
In the display, the first isomorphism follows from the fact that
\[
J^{n-i+1} \cap J^{n-i}I^i=J^{n-i+1}I^i=J^{n-i}I^i\cap J^{n-i+1}Q^{i-1},
\]
which holds since by Proposition \ref{prop_decomposition},
\[
J^{n-i+1}I^i \subseteq J^{n-i}I^i\cap J^{n-i+1}Q^{i-1} \subseteq J^{n-i}I^i \cap J^{n-i+1}  \subseteq I^i\cap J^{n-i+1}=J^{n-i+1}I^i.
\]

Now for $i=n$, the exact sequence \eqref{eq_exactseq} yields
\[
\Ass_R (R/Q^n) \subseteq \Ass_R(J/JQ^{n-1}) \cup \Ass_R (A/I^n\otimes_k B/J).
\]
Similarly for the cases $2\le i\le n-1$ and $i=1$. Putting everything together, 
\begin{equation}
\label{eq_inclusion_Ass2}
\Ass_R (R/Q^n) \subseteq \bigcup_{i=1}^n \Ass_R (A/I^i \otimes_k J^{n-i}/J^{n-i+1}).
\end{equation}
Combining \eqref{eq_inclusion_Ass1}, \eqref{eq_inclusion_Ass2} and Theorem \ref{thm_asso}, we finish the proof of (1).

(2) If $\Ass_A(A/I^n)=\Ass_A(I^{n-1}/I^n)$ for all $n\ge 1$, then clearly the upper bound and lower bound for $\Ass(R/(I+J)^n)$ in part (1) coincide. The conclusion follows.

(3) In this situation, every associated prime of $A/I^i$ is generated by variables. In particular, $\pp+\qq$ is a prime ideal of $R$ for any $\pp \in \Ass(A/I^i), \qq\in \Ass_B(B/J^j)$ and $i, j\ge 1$. The conclusion follows from part (2).
\end{proof}
\begin{rem}
If Question \ref{quest_ass} has a positive answer, then we can strengthen the conclusion of Theorem \ref{thm_Asscontainments}: Let $A, B$ be standard graded polynomial rings over $k$. Let $I,J$ be proper homogeneous ideals of $A,B$, respectively. Then for all $n\ge 1$, there is an equality
\[
\Ass_R \frac{R}{(I+J)^n} = \bigcup_{i=1}^n \mathop{\bigcup_{\pp \in \Ass_A(A/I^i)}}_{\qq \in \Ass_B(B/J^{n-i+1})} \Min(R/(\pp+\qq)).
\]
\end{rem}

\begin{ex}
In general, for singular base rings, each of the inclusions of Theorem \ref{thm_Asscontainments} can be strict. First, take $A=k[a,b,c]/(a^2,ab,ac), I=(b)$, $B=k, J=(0)$. 
Then $R=A, Q=I=(b)$ and $I^2=(b^2)$. Let $\mm=(a,b,c)$. One can check that $a \in (I^2:\mm)\setminus I^2$ and $I/I^2\cong A/(a,b)\cong k[c]$, whence $\depth(A/I^2)=0 <\depth(I/I^2)$. In particular, $\mm\in \Ass_A(A/I^2) \setminus \Ass_A(I/I^2)$. Thus the lower bound for $\Ass_R(R/Q^2)$ is strict in this case.

Second, take $A,I$ as above and $B=k[x,y,z]$, $J=(x^4,x^3y,xy^3,y^4,x^2y^2z).$ In this case $Q=(b,x^4,x^3y,xy^3,y^4,x^2y^2z) \subseteq k[a,b,c,x,y,z]/(a^2,ab,ac)$. Then $c+z$ is $(R/Q^2)$-regular, so $\depth R/Q^2 >0=\depth A/I^2+\depth B/J$. Hence $(a,b,c,x,y,z)$ does not lie in $\Ass_R(R/Q^2)$, but it belongs to the upper bound for $\Ass_R(R/Q^2)$ in Theorem \ref{thm_Asscontainments}(1).
\end{ex}

\subsection{Asymptotic primes}
 Recall that if $I\neq A$, $\grade(I,A)$ denotes the maximal length of an $A$-regular sequence consisting of elements in $I$; and if $I=A$, by convention, $\grade(I,A)=\infty$ (see \cite[Section 1.2]{BH} for more details). Let $\astab^*(I)$ denote the minimal integer $m\ge 1$ such that both $\Ass_A(A/I^i)$ and $\Ass_A(I^{i-1}/I^i)$ are constant sets for all $i\ge m$. By a result due to McAdam and Eakin \cite[Corollary 13]{McE}, for all $i\ge \astab^*(I)$, $\Ass_A(A/I^i)\setminus \Ass_A(I^{i-1}/I^i)$ consists only of prime divisors of $(0)$. Hence if $\grade(I,A)\ge 1$, i.e. $I$ contains a non-zerodivisor, then $\Ass_A(A/I^i)= \Ass_A(I^{i-1}/I^i)$ for all $i\ge \astab^*(I)$. Denote $\Ass_A^*(I)=\bigcup_{i\ge 1} \Ass_A(A/I^i)=\bigcup_{i=1}^{\astab^*(I)}\Ass_A(A/I^i)$ and 
$$
\Ass_A^{\infty}(I)=\Ass_A(A/I^i) \quad \text{for any $i\ge \astab^*(I)$.}
$$ 
The following folklore lemma will be useful.
\begin{lem}
\label{lem_unionAssequal}
For any $n\ge 1$, we have
\[
\bigcup_{i=1}^n \Ass_A(A/I^i)=\bigcup_{i=1}^n \Ass_A(I^{i-1}/I^i).
\]
In particular, if $\grade(I,A)\ge 1$ then 
$$\Ass_A^*(I)=\bigcup_{i=1}^{\astab^*(I)} \Ass_A(I^{i-1}/I^i)=\bigcup_{i\ge 1} \Ass_A(I^{i-1}/I^i).$$
\end{lem}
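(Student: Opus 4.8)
The plan is to prove the displayed equality $\bigcup_{i=1}^n \Ass_A(A/I^i)=\bigcup_{i=1}^n \Ass_A(I^{i-1}/I^i)$ by induction on $n$, using the short exact sequence $0\to I^{n-1}/I^n\to A/I^n\to A/I^{n-1}\to 0$. The base case $n=1$ is trivial since $I^0/I^1=A/I$. For the inductive step, the inclusion ``$\supseteq$'' is immediate: the short exact sequence gives $\Ass_A(I^{n-1}/I^n)\subseteq \Ass_A(A/I^n)$, and by induction $\bigcup_{i=1}^{n-1}\Ass_A(I^{i-1}/I^i)=\bigcup_{i=1}^{n-1}\Ass_A(A/I^i)\subseteq \bigcup_{i=1}^n\Ass_A(A/I^i)$. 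For the reverse inclusion ``$\subseteq$'', again from the short exact sequence we have $\Ass_A(A/I^n)\subseteq \Ass_A(I^{n-1}/I^n)\cup \Ass_A(A/I^{n-1})$, and by the induction hypothesis $\Ass_A(A/I^{n-1})\subseteq \bigcup_{i=1}^{n-1}\Ass_A(I^{i-1}/I^i)$, so $\bigcup_{i=1}^n\Ass_A(A/I^i)\subseteq \bigcup_{i=1}^n\Ass_A(I^{i-1}/I^i)$. Combining the two inclusions completes the induction.

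For the ``in particular'' statement, suppose $\grade(I,A)\ge 1$, so $I$ contains a non-zerodivisor. By the discussion preceding the lemma (via McAdam--Eakin \cite[Corollary 13]{McE}), $\Ass_A(A/I^i)=\Ass_A(I^{i-1}/I^i)$ for all $i\ge \astab^*(I)$; in particular $\Ass_A(I^{i-1}/I^i)=\Ass_A^\infty(I)$ is constant and contained in $\Ass_A^*(I)$ for such $i$. Applying the first part with $n=\astab^*(I)$ gives $\Ass_A^*(I)=\bigcup_{i=1}^{\astab^*(I)}\Ass_A(A/I^i)=\bigcup_{i=1}^{\astab^*(I)}\Ass_A(I^{i-1}/I^i)$. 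Finally, for any $n\ge \astab^*(I)$ the first part gives $\bigcup_{i=1}^n\Ass_A(I^{i-1}/I^i)=\bigcup_{i=1}^n\Ass_A(A/I^i)=\Ass_A^*(I)$ (the last equality because $\Ass_A(A/I^i)\subseteq \Ass_A^*(I)$ always, and equality of the finite union with $\Ass_A^*(I)$ holds already at $n=\astab^*(I)$); letting $n\to\infty$ yields $\bigcup_{i\ge 1}\Ass_A(I^{i-1}/I^i)=\Ass_A^*(I)$.

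This argument is entirely formal; the only input beyond the short exact sequence is the standard fact $\Ass(M)\subseteq \Ass(M')\cup\Ass(M'')$ and $\Ass(M')\subseteq\Ass(M)$ for a short exact sequence $0\to M'\to M\to M''\to 0$, together with the already-cited consequence of McAdam--Eakin for the second assertion. I do not anticipate any genuine obstacle; the only point requiring a little care is bookkeeping the indices so that the induction hypothesis is applied to the union up to $n-1$ rather than to a single term.
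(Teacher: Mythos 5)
Your proof is correct and takes essentially the same route as the paper: both inclusions of the first equality come from the short exact sequence $0\to I^{i-1}/I^i\to A/I^i\to A/I^{i-1}\to 0$ (the paper phrases the reverse inclusion via $\Ass_A(A/I^i)\subseteq\Ass_A(I^{i-1}/I^i)\cup\Ass_A(A/I^{i-1})$, which is exactly your inductive step), and the ``in particular'' part is then a direct consequence of the definitions of $\astab^*(I)$ and $\Ass_A^*(I)$. One small remark: the McAdam--Eakin input is not actually needed for the displayed ``in particular'' equalities, which follow from the first assertion alone; the grade hypothesis and McAdam--Eakin only become relevant when one wants to compare $\Ass_A^\infty(I)$ with $\Ass_A(I^{i-1}/I^i)$ for large $i$, as is done elsewhere in the paper.
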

\begin{proof}
For the first assertion: Clearly the left-hand side contains the right-hand one. Conversely, we deduce from the inclusion $\Ass_A(A/I^i) \subseteq \Ass_A(I^{i-1}/I^i) \cup \Ass_A(A/I^{i-1})$ for $2\le i\le n$ that the other containment is valid as well.

The remaining assertion is clear.
\end{proof}

Now we describe the asymptotic associated primes of $(I+J)^n$ for $n\gg 0$ and provide an upper bound for $\astab^*(I+J)$ under certain conditions on $I$ and $J$.
\begin{thm}
\label{thm_asymptoticAss}
Assume that $\grade(I,A)\ge 1$ and $\grade(J,B)\ge 1$, e.g. $A$ and $B$ are domains and $I, J$ are proper ideals. Then for all $n\ge \astab^*(I)+\astab^*(J)-1$, we have
\begin{align*}
\Ass_R \frac{R}{(I+J)^n}&=\Ass_R \frac{(I+J)^{n-1}}{(I+J)^n} \\
&=  \mathop{\bigcup_{\pp \in \Ass^*_A(I)}}_{\qq \in \Ass^{\infty}_B(J)} \Min_R(R/\pp+\qq) \bigcup \mathop{\bigcup_{\pp \in \Ass^{\infty}_A(I)}}_{\qq \in \Ass^*_B(J)} \Min_R(R/\pp+\qq). 
\end{align*}
In particular, $\astab^*(I+J)\le \astab^*(I)+\astab^*(J)-1$ and
\[
\Ass^{\infty}_R(I+J)= \mathop{\bigcup_{\pp \in \Ass^*_A(I)}}_{\qq \in \Ass^{\infty}_B(J)} \Min_R(R/\pp+\qq) \bigcup \mathop{\bigcup_{\pp \in \Ass^{\infty}_A(I)}}_{\qq \in \Ass^*_B(J)} \Min_R(R/\pp+\qq). 
\]
\end{thm}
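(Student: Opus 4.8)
The plan is to read off both equalities and the two corollaries from the two-sided set bound of Theorem~\ref{thm_Asscontainments}(1), using only the McAdam--Eakin stabilization recalled just above and the bookkeeping Lemma~\ref{lem_unionAssequal}; no new homological input is needed. Write $Q=I+J$, $a=\astab^*(I)$, $b=\astab^*(J)$, and fix $n\ge a+b-1$. First I would record what the grade hypotheses buy us: since $\grade(I,A)\ge 1$, the ideal $I$ contains a nonzerodivisor, so the ``exceptional'' primes in McAdam--Eakin (divisors of $(0)$) do not occur and $\Ass_A(A/I^i)=\Ass_A(I^{i-1}/I^i)=\Ass_A^{\infty}(I)$ for all $i\ge a$; combining this with Lemma~\ref{lem_unionAssequal} gives $\Ass_A^{*}(I)=\bigcup_{i=1}^{a}\Ass_A(I^{i-1}/I^i)$. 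The analogues hold for $J$ with $b$ in place of $a$.

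For the upper bound, Theorem~\ref{thm_Asscontainments}(1) together with Theorem~\ref{thm_asso} gives that $\Ass_R(R/Q^n)$ is contained in $\bigcup_{i=1}^{n}\bigcup_{\pp,\qq}\Min_R(R/\pp+\qq)$, the inner union running over $\pp\in\Ass_A(A/I^i)$ and $\qq\in\Ass_B(J^{n-i}/J^{n-i+1})$. I would split the index range at $i_0=n-b+1$, which satisfies $1\le i_0\le n$ because $b\le n$. For $1\le i\le i_0$ the exponent $n-i+1$ is $\ge b$, so $\Ass_B(J^{n-i}/J^{n-i+1})=\Ass_B^{\infty}(J)$ while $\Ass_A(A/I^i)\subseteq\Ass_A^{*}(I)$: these $i$ feed the first union on the right-hand side. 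For $i_0<i\le n$ we have $n-i+1\le b-1$, so $\Ass_B(J^{n-i}/J^{n-i+1})\subseteq\Ass_B^{*}(J)$; and since $n\ge a+b-1$ the smallest index in this block, $i_0+1=n-b+2$, is $\ge a+1$, so $\Ass_A(A/I^i)=\Ass_A^{\infty}(I)$ throughout: these $i$ feed the second union. Hence $\Ass_R(R/Q^n)$ is contained in the displayed set.

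For the lower bound I would use that, by Proposition~\ref{prop_decomposition} and Theorem~\ref{thm_asso}, $\Ass_R(Q^{n-1}/Q^n)$ equals the lower bound of Theorem~\ref{thm_Asscontainments}(1) exactly, i.e. $\bigcup_{i=1}^{n}\bigcup_{\pp,\qq}\Min_R(R/\pp+\qq)$ over $\pp\in\Ass_A(I^{i-1}/I^i)$, $\qq\in\Ass_B(J^{n-i}/J^{n-i+1})$, and that $\Ass_R(Q^{n-1}/Q^n)\subseteq\Ass_R(R/Q^n)$ since the former is the associated-prime set of a submodule of $R/Q^n$. Now take $\pp\in\Ass_A^{*}(I)$ and $\qq\in\Ass_B^{\infty}(J)$: by the first paragraph $\pp\in\Ass_A(I^{i-1}/I^i)$ for some $i\le a$, and then $n-i+1\ge n-a+1\ge b$ forces $\qq\in\Ass_B(J^{n-i}/J^{n-i+1})$ as well, so $\Min_R(R/\pp+\qq)$ lies in the lower bound. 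Symmetrically, for $\pp\in\Ass_A^{\infty}(I)$ and $\qq\in\Ass_B^{*}(J)$ pick $m\le b$ with $\qq\in\Ass_B(J^{m-1}/J^m)$ and set $i=n-m+1$; then $a\le i\le n$, so $\pp\in\Ass_A(I^{i-1}/I^i)$, and again $\Min_R(R/\pp+\qq)$ lies in the lower bound. Thus the displayed set is contained in $\Ass_R(Q^{n-1}/Q^n)$, and combining with the upper bound gives the two claimed equalities. Since the displayed set no longer depends on $n$ for $n\ge a+b-1$, both $\Ass_R(R/Q^n)$ and $\Ass_R(Q^{n-1}/Q^n)$ are constant there, which yields $\astab^*(I+J)\le a+b-1$ and identifies $\Ass_R^{\infty}(I+J)$ with the displayed union.

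I do not expect a genuinely hard step here: the content is the index bookkeeping that makes every term of the upper bound land in one of the two pieces of the target set and, conversely, realizes each piece inside the lower bound. The one place where the exact threshold $n\ge\astab^*(I)+\astab^*(J)-1$ is needed is in guaranteeing simultaneously that (i) whenever the factor $J^{n-i}/J^{n-i+1}$ is still pre-stable the exponent $i$ has already entered the stable range for $I$, and (ii) every $\pp\in\Ass_A^{*}(I)$, realized at some $i\le\astab^*(I)$, gets paired with a factor $J^{n-i}/J^{n-i+1}$ that is already stable; one should double-check the edge cases $\astab^*(I)=1$ or $\astab^*(J)=1$ (and $n$ at the threshold), where some index block is empty and the argument degenerates harmlessly.
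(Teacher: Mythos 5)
Your proposal is correct and takes essentially the same approach as the paper: invoke Theorem~\ref{thm_Asscontainments}(1) together with Theorem~\ref{thm_asso}, use the McAdam--Eakin observation (valid because $\grade\ge 1$ rules out divisors of $(0)$) to identify $\Ass_A(A/I^i)=\Ass_A(I^{i-1}/I^i)=\Ass_A^\infty(I)$ for $i\ge\astab^*(I)$, and do the same index bookkeeping with the split at $i_0=n-\astab^*(J)+1$. The only (minor) organizational difference is that you prove \emph{one} inclusion on each side --- upper bound $\subseteq$ target set, and target set $\subseteq$ lower bound --- and let the chain lower $\subseteq\Ass_R(R/Q^n)\subseteq$ upper close everything up, whereas the paper shows both the lower and the upper bounds of Theorem~\ref{thm_Asscontainments}(1) are themselves \emph{equal} to the target set (stating the upper-bound case is ``entirely similar''); your version is a slight streamlining of the same argument, not a different route.
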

\begin{proof}
Denote $Q=I+J$. It suffices to prove that for $n\ge \astab^*(I)+\astab^*(J)-1$, both the lower bound (which is nothing but $\Ass_R(Q^{n-1}/Q^n)$) and upper bound for $\Ass_R(R/Q^n)$ in Theorem \ref{thm_Asscontainments} are equal to 
\[
\mathop{\bigcup_{\pp \in \Ass^*_A(I)}}_{\qq \in \Ass^{\infty}_B(J)} \Min_R(R/\pp+\qq) \bigcup \mathop{\bigcup_{\pp \in \Ass^{\infty}_A(I)}}_{\qq \in \Ass^*_B(J)} \Min_R(R/\pp+\qq).
\]
First, for the lower bound, we need to show that for $n\ge \astab^*(I)+\astab^*(J)-1$,
\begin{align}
&\bigcup_{i=1}^n \mathop{\bigcup_{\pp \in \Ass_A(I^{i-1}/I^i)}}_{\qq \in \Ass_B(J^{n-i}/J^{n-i+1})} \Min_R(R/\pp+\qq) \label{eq_asymptoticlowerboundofAss}\\
&= \mathop{\bigcup_{\pp \in \Ass^*_A(I)}}_{\qq \in \Ass^{\infty}_B(J)} \Min_R(R/\pp+\qq) \bigcup \mathop{\bigcup_{\pp \in \Ass^{\infty}_A(I)}}_{\qq \in \Ass^*_B(J)} \Min_R(R/\pp+\qq). \nonumber
\end{align}
If $i\le \astab^*(I)$, $n-i+1\ge \astab^*(J)$, hence $\Ass_B(J^{n-i}/J^{n-i+1})=\Ass^{\infty}_B(J)$. In particular,
\begin{align*}
&\bigcup_{i=1}^{\astab^*(I)} \mathop{\bigcup_{\pp \in \Ass_A(I^{i-1}/I^i)}}_{\qq \in \Ass_B(J^{n-i}/J^{n-i+1})} \Min_R(R/\pp+\qq) \\
&=\bigcup_{i=1}^{\astab^*(I)} \mathop{\bigcup_{\pp \in \Ass_A(I^{i-1}/I^i)}}_{\qq \in \Ass^{\infty}_B(J)} \Min_R(R/\pp+\qq)=\mathop{\bigcup_{\pp \in \Ass^*_A(I)}}_{\qq \in \Ass^{\infty}_B(J)} \Min_R(R/\pp+\qq),
\end{align*}
where the second equality follows from Lemma \ref{lem_unionAssequal}.

If $i\ge \astab^*(I)$ then $\Ass_A(A/I^i)=\Ass^{\infty}_A(I)$, $1\le n+1-i \le n+1-\astab^*(I)$. Hence
\begin{align*}
&\bigcup_{i=\astab^*(I)}^{n} \mathop{\bigcup_{\pp \in \Ass_A(I^{i-1}/I^i)}}_{\qq \in \Ass_B(J^{n-i}/J^{n-i+1})} \Min_R(R/\pp+\qq) \\
&=\bigcup_{i=1}^{n+1-\astab^*(I)} \mathop{\bigcup_{\pp \in \Ass^{\infty}_A(I)}}_{\qq \in \Ass_B(J^{i-1}/J^i)} \Min_R(R/\pp+\qq)=\mathop{\bigcup_{\pp \in \Ass^{\infty}_A(I)}}_{\qq \in \Ass^*_B(J)} \Min_R(R/\pp+\qq).
\end{align*}
The second equality follows from the inequality $n+1-\astab^*(I)\ge \astab^*(J)$ and Lemma \ref{lem_unionAssequal}. Putting everything together, we get \eqref{eq_asymptoticlowerboundofAss}. The argument for the equality of the upper bound is entirely similar. The proof is concluded.
\end{proof}

\section{The persistence property of sums}
\label{sect_persistence}

Recall that an ideal $I$ in a Noetherian ring $A$ has the \emph{persistence property} if $\Ass(A/I^n)\subseteq \Ass(A/I^{n+1})$ for all $n\ge 1$.
There exist ideals which fail the persistence property: A well-known example is $I=(a^4,a^3b,ab^3,b^4,a^2b^2c)\subseteq k[a,b,c]$, for which $I^n=(a,b)^{4n}$ and $(a,b,c) \in \Ass(A/I) \setminus \Ass(A/I^n)$ for all $n\ge 2$. (For the equality $I^n=(a,b)^{4n}$ for all $n\ge 2$, note that 
$$
U=(a^4,a^3b,ab^3,b^4) \subseteq I \subseteq (a,b)^4.
$$ 
Hence $U^n\subseteq I^n\subseteq (a,b)^{4n}$ for all $n$, and it remains to check that $U^n=(a,b)^{4n}$ for all $n\ge 2$. By direct inspection, this holds for $n\in \{2,3\}$. For $n\ge 4$, since $U^n=U^2U^{n-2}$, we are done by induction.) However, in contrast to the case of monomial ideals, it is still challenging to find a homogeneous prime ideal without the persistence property (if it exists).

Swanson and R. Walker raised the  question \cite[Question 1.6]{SW} whether given two ideals $I$ and $J$ living in different polynomial rings, if both of them have the persistence property, so does $I+J$. The third main result answers in the positive \cite[Question 1.6]{SW} in many new cases. In fact, its case (ii) subsumes \cite[Corollary 1.7]{SW}. 

\begin{cor}
\label{cor_persistence}
Let $A$ and $B$ be standard graded polynomial rings over $k$, $I$ and $J$ are proper homogeneous ideals of $A$ and $B$, respectively. Assume that $I$ has the persistence property, and $\Ass(A/I^n)=\Ass(I^{n-1}/I^n)$ for all $n\ge 1$. Then $I+J$ has the persistence property. In particular, this is the case if any of the conditions hold:
\begin{enumerate}[\quad \rm(i)]
 \item $I$ is a monomial ideal satisfying the persistence property;
 \item $I^{n+1}:I=I^n$ for all $n\ge 1$. 
 \item $I^n$ is unmixed for all $n\ge 1$.
 \item $\chara k=0$, $\dim(A/I)\le 1$ and $I$ has the persistence property.
\end{enumerate}
\end{cor}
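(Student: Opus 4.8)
The plan is to deduce the persistence property of $I+J$ directly from the exact formula in Theorem \ref{thm_Asscontainments}(2). Under the standing hypothesis $\Ass_A(A/I^n)=\Ass_A(I^{n-1}/I^n)$ for all $n\ge 1$, that theorem gives, for every $n\ge 1$,
\[
\Ass_R \frac{R}{(I+J)^n} = \bigcup_{i=1}^n \mathop{\bigcup_{\pp \in \Ass_A(A/I^i)}}_{\qq \in \Ass_B(J^{n-i}/J^{n-i+1})} \Min_R(R/\pp+\qq).
\]
So the whole matter reduces to comparing the right-hand sides for the exponents $n$ and $n+1$.

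First I would take $P\in \Ass_R(R/(I+J)^n)$ and, using the displayed formula, pick an index $1\le i\le n$ together with primes $\pp\in \Ass_A(A/I^i)$ and $\qq\in \Ass_B(J^{n-i}/J^{n-i+1})$ such that $P\in \Min_R(R/\pp+\qq)$. Since $I$ has the persistence property, $\pp\in \Ass_A(A/I^{i+1})$. The key (and essentially only) observation is the index shift $(i,n)\mapsto (i+1,n+1)$: it leaves the $J$-exponent unchanged, because $(n+1)-(i+1)=n-i$, so $\qq\in \Ass_B(J^{(n+1)-(i+1)}/J^{(n+1)-(i+1)+1})$, and it keeps the running index in range, since $2\le i+1\le n+1$. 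Feeding $(i+1,n+1)$, $\pp$, $\qq$ into the formula for exponent $n+1$ shows $P\in \Min_R(R/\pp+\qq)\subseteq \Ass_R(R/(I+J)^{n+1})$. This proves $\Ass_R(R/(I+J)^n)\subseteq \Ass_R(R/(I+J)^{n+1})$ for all $n\ge 1$, that is, $I+J$ has the persistence property.

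For the ``in particular'' clauses, I would check that each of (i)--(iv) implies the two hypotheses of the main statement, namely that $I$ has the persistence property and that $\Ass_A(A/I^n)=\Ass_A(I^{n-1}/I^n)$ for all $n$. Case (i) follows from Theorem \ref{thm_specialcase_ass}(1). Case (ii): Lemma \ref{lem_sufficient_forpersistence} shows that $I^{n+1}:I=I^n$ for all $n$ is equivalent to $\depth\gr_I(A)\ge 1$ and implies the persistence property; combined with Theorem \ref{thm_specialcase_ass}(2) we obtain the equality of associated primes. Case (iv) is Theorem \ref{thm_specialcase_ass}(7) together with the assumed persistence property. Case (iii) is slightly different: Theorem \ref{thm_specialcase_ass}(5) gives the equality of associated primes, and for the persistence property I would note that if $I^n$ is unmixed for all $n$ then $\Ass_A(A/I^n)=\Min(A/I^n)=\Min(A/I)$ is independent of $n$, so persistence is automatic.

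As for the main obstacle: there is essentially none once Theorem \ref{thm_Asscontainments}(2) is available --- the entire argument is the bookkeeping of the shift $(i,n)\mapsto(i+1,n+1)$, whose only subtlety is to verify that the $J$-part of the double union is literally unchanged and that the shifted index stays admissible. The one place demanding a separate (but easy) remark is case (iii), where unmixedness of all powers must be converted into constancy of the associated-prime sets.
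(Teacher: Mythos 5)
Your proof is correct and follows essentially the same approach as the paper: it applies Theorem \ref{thm_Asscontainments}(2) and uses the index shift $(i,n)\mapsto(i+1,n+1)$, which fixes the $J$-factor and bumps the $I$-factor, exactly as in the paper's argument. Your explicit remark in case (iii) --- that unmixedness of all powers gives $\Ass_A(A/I^n)=\Min(A/I^n)=\Min(A/I)$, hence constancy and in particular persistence --- is a correct filling-in of a step the paper leaves implicit.
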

\begin{proof}
The hypothesis $\Ass(A/I^n)=\Ass(I^{n-1}/I^n)$ for all $n\ge 1$ and Theorem \ref{thm_Asscontainments}(2), yields for all such $n$ an equality
\begin{equation}
\label{eq_Ass_formula}
\Ass_R \frac{R}{(I+J)^n}  =   \bigcup_{i=1}^n \mathop{\bigcup_{\pp \in \Ass_A(A/I^i)}}_{\qq \in \Ass_B(J^{n-i}/J^{n-i+1})} \Min_R(R/\pp+\qq).
\end{equation}
Take $P \in \Ass_R \dfrac{R}{(I+J)^n}$, then for some $1\le i\le n,\pp \in \Ass_A(A/I^i)$ and $\qq \in \Ass_B(J^{n-i}/J^{n-i+1})$, we get $P\in  \Min_R(R/\pp+\qq)$.

Since $I$ has the persistence property, it follows that $\Ass(A/I^i)\subseteq \Ass(A/I^{i+1})$, so $\pp\in \Ass(A/I^{i+1})$. Hence thanks to \eqref{eq_Ass_formula},
\[
P \in \mathop{\bigcup_{\pp_1 \in \Ass_A(A/I^{i+1})}}_{\qq_1 \in \Ass_B(J^{n-i}/J^{n-i+1})} \Min_R(R/\pp_1+\qq_1) \subseteq \Ass_R \frac{R}{(I+J)^{n+1}}.
\]
Therefore $I+J$ has the persistence property.

The second assertion is a consequence of the first, Theorem \ref{thm_specialcase_ass} and Lemma \ref{lem_sufficient_forpersistence}. 
\end{proof}

%%%%%%%%
\section*{Acknowledgments}
The first author  (HDN) and the second author (QHT) are supported by Vietnam National Foundation for Science and Technology Development (NAFOSTED) under grant numbers 101.04-2019.313 and 1/2020/STS02, respectively. Nguyen is also grateful to the support of Project CT0000.03/19-21 of the Vietnam Academy of Science and Technology. Tran also acknowledges the partial support of the Core Research Program of Hue University, Grant No. NCM.DHH.2020.15. Finally, part of this work was done while the second author was visiting the Vietnam Institute for Advance Study in Mathematics (VIASM). He would like to thank the VIASM for its hospitality and financial support. 

The authors are grateful to the anonymous referee for his/her careful reading of the manuscript and many useful suggestions.

%%%%%%%%%%%%%%%%%%%%%%%%%%%%%%%%%%%%%%%%%%%%%%%%%%%%%%%%%%%%%%%%%%%%%%%%%%%%%%

\end{document}